\newtheorem{conj}{Conjecture}[section]
\newtheorem{theorem}{Theorem}[section]
\newtheorem{prop}[theorem]{Proposition}
\newtheorem{cor}[theorem]{Corollary}
\newtheorem{lemma}[theorem]{Lemma}
\newtheorem{example}[theorem]{Example}
\theoremstyle{definition}
\newtheorem{defn}[theorem]{Definition}
\theoremstyle{remark}
\def\mF{\mathcal F}
\def\mH{\mathcal H}
\def\mG{\mathcal G}
\def\mC{\mathcal C}
\title{A Cubical Perspective on Complements of Union-Closed Families of Sets}
\author{Dhruv Bhasin\thanks{Department of Mathematics, Indian Institute of Science Education and Research, Pune\\Email id: bhasin.dhruv@students.iiserpune.ac.in} }
\begin{document}

\maketitle
\begin{abstract}
    Complements of union-closed families of sets, over a finite ground set, are known as simply rooted families of sets. Cubical sets are widely studied topological objects having applications in computational homology. In this paper, we look at simply rooted families of sets from the perspective of cubical sets. That is, for every family $\mathcal F$ of subsets of a finite set, we construct a natural cubical set $X(\mathcal F)$ (corresponding to it). We show that for every simply rooted family $\mathcal F$, containing the empty set, the cubical set $X(\mathcal F)$ is always acyclic (that is, it has trivial reduced cubical homology). As a consequence of this, using the Euler-Poincar\`{e} formula, we obtain a formula satisfied by all simply rooted families of sets which contain the empty set. We also provide an elementary proof of this formula. 
\end{abstract}
\section{Introduction}
Intersection-closed structures are ubiquitous in all of mathematics. Collections of all subgroups of a group, subspaces of a vector space, subrings of a ring, independents sets of a graph are closed under intersections and the list goes on. In combinatorics, the dual concept of intersection-closed families of sets, namely union-closed families of sets is very widely studied. Most of this study is driven by the quest of solving the famous union-closed sets conjecture due to Frankl (see \cite{frankl1995extremal}). Let $[n] = \{1, \dots, n\}$ and let $2^{[n]}$ denote its power set. A family of sets $\mF \subseteq 2^{[n]}$ is said to be \textit{union-closed} if for every $A, B \in \mF$, we have $A \cup B \in \mF$. Frankl's union-closed sets conjecture states that:

\begin{conj}\label{conj:frankl}
    (see \cite{frankl1995extremal}) Let $\mathcal F \subseteq 2^{[n]}$ be a union-closed family of sets such that $\mF \neq \{\emptyset\}$. Then, there is an element $i \in [n]$ such that $i$ is in at least half of the member sets of $\mF$.
\end{conj}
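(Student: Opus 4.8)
The displayed Conjecture~\ref{conj:frankl} is Frankl's union-closed sets conjecture, a long-standing open problem; so what follows is a plan of attack built around the cubical machinery developed in the rest of this paper, not a complete argument.

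\emph{Setup.} Let $\mF \subseteq 2^{[n]}$ be union-closed with $\mF \neq \{\emptyset\}$. Replacing $[n]$ by $\bigcup_{A\in\mF}A$ we may assume every $i\in[n]$ lies in a member; adding $\emptyset$ if necessary (which at worst strengthens the conclusion) we may assume $\emptyset\in\mF$, so $[n]=\bigcup_{A\in\mF}A\in\mF$ as well. Put $m=|\mF|$ and $d_i=|\{A\in\mF:i\in A\}|$; the goal is an $i$ with $2d_i\ge m$. The identity $\sum_{i=1}^n d_i=\sum_{A\in\mF}|A|$ shows unweighted averaging is useless here — the Boolean lattice $2^{[n]}$ makes the relevant bound exactly tight — so structural input is required. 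The deletion $\mF_{\bar\imath}:=\{A\in\mF:i\notin A\}$, the link $\mF_i:=\{A\in\mF:i\in A\}$ and the contraction $\mF/i:=\{A\setminus\{i\}:A\in\mF\}$ are all union-closed, and $d_i=m-|\mF_{\bar\imath}|$, so the conjecture is equivalent to producing $i$ with $|\mF_{\bar\imath}|\le m/2$.

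\emph{The cubical approach.} Feed the complement $\mF^c:=2^{[n]}\setminus\mF$, which is simply rooted, into the main theorem (one caveat: reconciling its ``contains $\emptyset$'' hypothesis with the normalizations above needs some care about whether to include $\emptyset$ or $[n]$). Acyclicity of $X(\mF^c)$ gives the linear relation $\sum_{k\ge0}(-1)^k f_k(X(\mF^c))=1$ among the cube-counts, which via the Euler--Poincar\'e formula already extracted in this paper becomes a polynomial identity in the level sizes $|\{A\in\mF:|A|=k\}|$. The plan is to apply this identity not only to $\mF$ but to every $\mF_{\bar\imath}$ and $\mF_i$, exploit the inclusion/quotient relations between $X(\mF^c)$, $X(\mF_{\bar\imath}^c)$, $X(\mF_i^c)$ (and the Mayer--Vietoris-type sequences they yield), and sum over $i$, using $\sum_i|\mF_{\bar\imath}|=nm-\sum_{A}|A|$, to turn the hypothesis ``$|\mF_{\bar\imath}|>m/2$ for all $i$'' into a contradiction with acyclicity. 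A likely necessary strengthening: replace the single Euler-characteristic identity by a discrete Morse function on $X(\mF^c)$ (acyclicity forces it to collapse to a point up to homology) and bound its number of critical cubes, which would yield inequalities among the $f_k$ rather than one equation.

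\emph{Main obstacle.} The constant $1/2$ is attained, with zero slack (the Boolean lattice and the other equality cases), so any valid proof must be tight; and a single homological identity, being one linear equation among the $f$-numbers, cannot by itself separate near-extremal families from the conjectured threshold. All current partial results stop short of $1/2$ — Reimer's bound $\frac1m\sum_{A\in\mF}|A|\ge\frac12\log_2 m$ on the average member size, Knill-type absolute constants, and the entropy method of Gilmer with its refinements, which reach only $\max_i d_i\gtrsim 0.38\,m$. So the crux is to manufacture a genuinely new inequality — sharp discrete-Morse bounds on $X(\mF^c)$, or a log-concavity/entropy estimate for $\sum_k f_k t^k$ — that is tight on the extremal families; I do not see how to do this, and I would accordingly expect this line of attack to fall short of the full conjecture. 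The realistic outcomes are (i) re-deriving the best known constant through the cubical/Morse picture, and (ii) proving the conjecture unconditionally when $X(\mF^c)$ is shellable or controllably collapsible, where the $f$-number identity together with the collapsing sequence pins $\max_i d_i$ down exactly.
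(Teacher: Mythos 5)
This statement is Frankl's union-closed sets conjecture, which is a famous open problem; the paper does not prove it (nor does anyone), and in fact the author explicitly writes that the paper's results ``do not improve upon Conjecture~\ref{conj:frankl}.'' It appears in the paper only as background and motivation. You have correctly recognized this and have offered a plan of attack rather than a proof, and you are candid that the plan falls short; that is the right assessment. There is therefore no ``paper proof'' to compare against, and no gap to identify in your write-up beyond the ones you yourself flag.

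A few remarks on the sketch itself, for what they are worth. Your instinct that a single Euler--Poincar\'e identity (Corollary~\ref{cor:euler-poincare}) is one linear equation among $f$-numbers and cannot by itself force a tight inequality like $\max_i d_i \ge m/2$ is sound, and the paper's elementary proof of that corollary (Lemma~\ref{lemma:rooted-euler-poincare} shows it decomposes as $2^n$ trivial per-set identities) only reinforces this: the identity carries no distributional information about where elements concentrate. The normalization caveat you flag is real and a bit sharper than you state: Theorem~\ref{thm:acyclic} requires $\emptyset$ to lie in the \emph{simply rooted} family, i.e.\ $\emptyset \in 2^{[n]}\setminus\mF$, so adding $\emptyset$ to the union-closed $\mF$ to normalize the denominator is exactly the move that removes the hypothesis you need; one has to choose between the standard reduction $\emptyset\in\mF$ and applicability of the acyclicity theorem. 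Your suggested escalation to a discrete Morse function or a Mayer--Vietoris sequence on $X(\mF^c)$ versus $X(\mF_{\bar\imath}^c)$ is a reasonable direction beyond what the paper does, but as you note the extremal tightness of the $1/2$ bound (attained on $2^{[n]}$, where $X(\mF^c)=\emptyset$) means any inequality derived this way would have to degenerate to equality on the Boolean lattice, and it is not clear the cubical complex sees that case at all since it is empty there. None of this is a flaw in your submission: you have correctly identified the statement as open and have not overclaimed.
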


Conjecture~\ref{conj:frankl} has been studied in various contexts. It has formulations in the languages of Lattice Theory (\cite{poonen1992union}) and Graph Theory (\cite{bruhn2015graph}). For a survey of the union-closed sets conjecture, we refer the reader to \cite{bruhn2015journey}. Recently, information theory based methods were used (in \cite{gilmer2022constant}) for proving the first constant lower bound for Conjecture~\ref{conj:frankl}. That is, in \cite{gilmer2022constant}, the author showed that for every union-closed family of sets with at least two elements, there is an element in at least $1\%$ of the member sets of the family. Subsequently his ideas were improved to show that there is an element in at least $38.24\%$ many member sets of the family (see \cite{alweiss2022improved}, \cite{cambie2022better}, \cite{chase2022approximate}, \cite{pebody2022extension}, \cite{sawin2022improved}, \cite{yu2023dimension}). 

The relation between union-closed families of sets and simply rooted families was first made by Balla, Bollob\'{a}s, Eccles (see \cite{balla2013union}). In this paper, the authors investigated Conjecture~\ref{conj:frankl} for large union-closed families of sets. That is, they showed that Conjecture~\ref{conj:frankl} is true for all union-closed families of sets $\mathcal G \subseteq 2^{[n]}$ satisfying $|\mathcal G| \geq \frac{2}{3}2^n$. The authors then introduced and investigated simply rooted families for a slight strengthening of their result. In \cite{eccles2016stability}, the author further investigated simply rooted families to show that Conjecture~\ref{conj:frankl} holds for union-closed families $\mathcal G \subseteq 2^{[n]}$ satisfying $|\mathcal G| \geq (\frac{2}{3} - \frac{1}{104})2^n$. In \cite{karpas2017two}, the author studied simply rooted families using tools from Boolean analysis to show that there is a constant $c > 0$ such that Conjecture~\ref{conj:frankl} holds for all union-closed families satisfying $|\mathcal F| \geq (\frac{1}{2} - c)2^n$. In \cite{studer2021asymptotic}, the author showed that the equivalent version of Conjecture~\ref{conj:frankl} for simply rooted families holds asymptotically.

Informally speaking, \textit{cubical sets} are defined to be those subsets of the $n$-dimensional euclidean space which are created by putting together cubes (of dimension at most $n$) having vertices in the lattice $\mathbb Z^n$. Cubical sets have been widely studied in various contexts and both from the theoretical and practical point of view (see for example: \cite{barcelo2021homology}, \cite{choe2022cubical}, \cite{duval2011cellular}, \cite{haglund2012combination}, \cite{hetyei1994simplicial}, \cite{hetyei1995stanley}, \cite{kaczynski2004computational}, \cite{wagner2011efficient}). In this paper, for every family $\mF \subseteq 2^{[n]}$, we define a natural cubical set corresponding to it, denoted $X(\mF) \subseteq \mathbb R^n$, which is formed by geometrically putting together the `cubes' contained in $\mF$. (For a formal definition, we refer the reader to Definition~\ref{family-realization}). To the best of our knowledge, the cubical set $X(\mF)$ has not been studied in the literature. 

We ask the question: what is the homology of $X(\mF)$ when $\mF$ is simply rooted? Our main result is:

\begin{theorem}\label{thm:acyclic}
    Let $\mathcal F \subseteq 2^{[n]}$ be a simply rooted family of sets such that $\emptyset \in \mF$. Then, $X(\mF)$ is acyclic.
\end{theorem}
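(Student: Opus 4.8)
The plan is to induct on the size $n$ of the ground set, slicing $X(\mF)$ along the last coordinate direction and closing the induction with a Mayer--Vietoris argument. The base case $n=0$ is immediate, $X(\mF)$ being a single point. For the inductive step ($n\ge 1$) I would write $[0,1]^n=[0,1]^{n-1}\times[0,1]$, the last factor corresponding to coordinate $n$, and introduce the restricted families
\[
\mF_0=\{A\subseteq[n-1]:A\in\mF\},\qquad \mF_1=\{A\subseteq[n-1]:A\cup\{n\}\in\mF\}.
\]
Unwinding Definition~\ref{family-realization}, one should check that $X(\mF)$ is the union of three closed subcomplexes: a ``bottom'' $X(\mF_0)\times\{0\}$, a ``top'' $X(\mF_1)\times\{1\}$, and a ``vertical part'' $X(\mF_0\cap\mF_1)\times[0,1]$---a cube indexed by an interval $[A,B]$ with $n\in B\setminus A$ is filled in $\mF$ precisely when $[A,B\setminus\{n\}]$ lies entirely inside $\mF_0\cap\mF_1$. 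Since $\mF_0\cap\mF_1\subseteq\mF_0$ and $\mF_0\cap\mF_1\subseteq\mF_1$, the vertical part meets the two horizontal faces in subcomplexes; cutting it at $x_n=\tfrac12$ and gluing each half onto the adjacent face then presents $X(\mF)$ as a union $U\cup V$ in which $U$ deformation retracts onto $X(\mF_0)\times\{0\}$, $V$ onto $X(\mF_1)\times\{1\}$, and $U\cap V\cong X(\mF_0\cap\mF_1)$.

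The combinatorial core will be the claim that $\mF_0$, $\mF_1$ and $\mF_0\cap\mF_1$ are themselves simply rooted as families of subsets of $[n-1]$. Writing $\mG=2^{[n]}\setminus\mF$ for the union-closed complement of $\mF$, this is painless for $\mF_0$ and $\mF_1$, whose complements in $2^{[n-1]}$ are $\mG\cap 2^{[n-1]}$ and $\{A\subseteq[n-1]:A\cup\{n\}\in\mG\}$, both plainly union-closed. The genuine point is $\mF_0\cap\mF_1$: given $A,B\subseteq[n-1]$ with $A\cup B\in\mF_0\cap\mF_1$, I would apply simple-rootedness of $\mF$ to $A\cup B\in\mF$ and to $(A\cup B)\cup\{n\}=(A\cup\{n\})\cup(B\cup\{n\})\in\mF$. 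The two aligned outcomes put $A$ or $B$ into $\mF_0\cap\mF_1$ outright; in a mixed outcome, say $A\in\mF$ and $B\cup\{n\}\in\mF$, one is still done unless both $A\cup\{n\}\in\mG$ and $B\in\mG$, which is impossible since it would force $(A\cup B)\cup\{n\}=(A\cup\{n\})\cup B\in\mG$, contradicting $(A\cup B)\cup\{n\}\in\mF$. Hence $A\in\mF_0\cap\mF_1$ or $B\in\mF_0\cap\mF_1$, so the complement of $\mF_0\cap\mF_1$ is union-closed.

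To finish the induction I would split on whether $\{n\}\in\mF$. If $\{n\}\in\mF$, then $\emptyset$ belongs to all three of $\mF_0$, $\mF_1$, $\mF_0\cap\mF_1$, so the inductive hypothesis makes $X(\mF_0)$, $X(\mF_1)$ and $X(\mF_0\cap\mF_1)$ acyclic; after thickening $(X(\mF);U,V)$ to an excisive triad, the reduced Mayer--Vietoris sequence then forces $\widetilde H_*(X(\mF))=0$. If $\{n\}\notin\mF$, i.e.\ $\{n\}\in\mG$, then union-closedness of $\mG$ gives $\mF_1\subseteq\mF_0$, so $\mF_0\cap\mF_1=\mF_1$ and the whole vertical part $X(\mF_1)\times[0,1]$ can be pushed straight down into the bottom face; thus $X(\mF)$ deformation retracts onto $X(\mF_0)\times\{0\}\cong X(\mF_0)$, which is acyclic by induction since $\mF_0$ is simply rooted and still contains $\emptyset$. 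Either way $X(\mF)$ is acyclic. The step I expect to cost the most effort is making the slicing decomposition and the excision/Mayer--Vietoris input rigorous for cubical sets---fattening closed subcomplexes to open neighborhoods, or invoking the form of Mayer--Vietoris valid for CW subcomplexes---together with tracking precisely when $\emptyset$ survives in the restricted families, which is exactly what forces the $\{n\}\in\mF$ versus $\{n\}\notin\mF$ dichotomy.
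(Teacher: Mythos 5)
Your proposal is correct but goes by a genuinely different route from the paper. The paper runs a double induction --- on $n$ and on $m(\mF)=\max_{B\in\mF}|B|$ --- peeling off one maximal member $A$ and writing $X(\mF)=X(\mF\setminus\{A\})\cup X(\mF_A)$; it proves $X(\mF_A)$ is star-shaped and handles the overlap $X(\mF_A\setminus\{A\})$ with a dedicated inductive lemma (Lemma~\ref{lemma:ayclic-cube-minus-points}) about unions of the form $\bigcup_i\bigcup_{j\ne i}|[i,[n]\setminus\{j\}]|$. You instead slice along the $n$-th coordinate into bottom, top and vertical cylinder, which hinges on a piece of combinatorics absent from the paper: that $\mF_0$, $\mF_1$ and especially $\mF_0\cap\mF_1$ inherit simple-rootedness. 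Your verification that $2^{[n-1]}\setminus(\mF_0\cap\mF_1)$ is union-closed (the mixed-outcome case, using that $A\cup\{n\}\in\mG$ and $B\in\mG$ would force $(A\cup B)\cup\{n\}\in\mG$) is correct and is the real content of your alternative. Two refinements worth noting: cutting the cylinder at $x_n=\tfrac12$ produces half-integer intervals, so your $U,V$ are not cubical sets in Kaczy\'nski et al.'s sense; taking instead $U=(X(\mF_0)\times\{0\})\cup(X(\mF_0\cap\mF_1)\times[0,1])$ and $V=(X(\mF_1)\times\{1\})\cup(X(\mF_0\cap\mF_1)\times[0,1])$ gives genuine cubical sets whose intersection is the cylinder, and the paper's Theorem~\ref{thm:ayclic-by-union-of-two} then replaces the Mayer--Vietoris/excision discussion you flagged as delicate. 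Your $\{n\}\in\mF$ vs.\ $\{n\}\notin\mF$ dichotomy correctly handles the case $\emptyset\notin\mF_0\cap\mF_1$ (in the latter branch $\mF_1\subseteq\mF_0$ and a vertical retraction suffices without any gluing lemma). In sum: you trade the paper's ad hoc Lemma~\ref{lemma:ayclic-cube-minus-points} for a clean heredity property of simply rooted families under coordinate slicing, running a single induction on $n$ where the paper runs two; both proofs are valid.
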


Using Theorem~\ref{thm:acyclic}, and the Euler-Poincar\`{e} formula, we obtain

 \begin{cor}\label{cor:euler-poincare}
     Let $\mF$ be a simply rooted family of sets such that $\emptyset \in \mF$. Let $\mathcal C_k(\mF) = \{[A, B] : A \subseteq B, [A, B] \subseteq \mathcal F, |B\setminus A| = k\}$ where $[A, B] = \{C \in 2^{[n]}: A\subseteq C \subseteq B\}$. Then, 
     \begin{equation}\label{eq:euler-poincare}
         \sum_{k=0}^n (-1)^k |\mC_k(\mF)| = 1.
     \end{equation}    
 \end{cor}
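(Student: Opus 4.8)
The plan is to deduce the identity purely formally from Theorem~\ref{thm:acyclic} by means of the Euler--Poincar\'e formula for finite chain complexes. The first step is to read off from Definition~\ref{family-realization} that the $k$-dimensional elementary cubes of $X(\mF)$ are in bijection with the intervals counted by $\mC_k(\mF)$: an interval $[A,B]$ with $[A,B]\subseteq\mF$ and $|B\setminus A|=k$ is exactly the combinatorial datum of a $k$-cube contained in $X(\mF)$, and every $k$-cube of $X(\mF)$ arises in this way from a unique such interval. In particular $X(\mF)$ is a \emph{finite} cubical set (it lies in $\mathbb R^n$ and has only finitely many cubes), and it is nonempty precisely because $\emptyset\in\mF$ forces the vertex corresponding to $[\emptyset,\emptyset]$ to lie in it.

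With this identification in hand, consider the cubical chain complex $(C_\bullet(X(\mF)),\partial_\bullet)$ with coefficients in $\mathbb Q$; then $\dim_{\mathbb Q} C_k(X(\mF)) = |\mC_k(\mF)|$ for $0\le k\le n$ and $C_k = 0$ otherwise. Applying the rank-nullity theorem to each boundary map $\partial_k$ and telescoping the resulting alternating sum yields the Euler--Poincar\'e formula
\[
\sum_{k=0}^{n}(-1)^k|\mC_k(\mF)| \;=\; \sum_{k=0}^{n}(-1)^k\dim_{\mathbb Q}H_k\bigl(X(\mF)\bigr).
\]
Since $X(\mF)$ is nonempty, one has $H_0(X(\mF))\cong\widetilde H_0(X(\mF))\oplus\mathbb Q$ and $H_k(X(\mF))\cong\widetilde H_k(X(\mF))$ for $k\ge 1$, so the right-hand side equals $1 + \sum_{k\ge 0}(-1)^k\dim_{\mathbb Q}\widetilde H_k(X(\mF))$.

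Finally, Theorem~\ref{thm:acyclic} asserts that $\widetilde H_k(X(\mF)) = 0$ for every $k$, so the correction term vanishes and we are left with $\sum_{k=0}^{n}(-1)^k|\mC_k(\mF)| = 1$, which is exactly \eqref{eq:euler-poincare}. The steps that genuinely need to be written out are the bijection between $\mC_k(\mF)$ and the set of $k$-cubes of $X(\mF)$ --- this is a matter of unwinding Definition~\ref{family-realization}, but one should check that distinct intervals yield distinct cubes and that no cube of $X(\mF)$ is overlooked --- and the bookkeeping relating reduced and unreduced homology, where the hypothesis $\emptyset\in\mF$ is precisely what produces the ``$1$'' on the right-hand side of \eqref{eq:euler-poincare}. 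I do not expect any substantive obstacle beyond these points: all of the real work is contained in Theorem~\ref{thm:acyclic}, and the Euler--Poincar\'e argument only serves to extract its numerical shadow.
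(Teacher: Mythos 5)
Your proposal is correct, and it is exactly the Euler--Poincar\'e route that the introduction of the paper advertises (``Using Theorem~\ref{thm:acyclic}, and the Euler--Poincar\'e formula, we obtain\ldots''). The only point you flag as needing care --- that the $k$-dimensional elementary cubes contained in $X(\mF)$ are in bijection with $\mC_k(\mF)$ --- is genuinely the one thing to spell out, and it does go through: given a $k$-cube $Q=I_1\times\cdots\times I_n\subseteq X(\mF)$, set $A=\{i:I_i=\{1\}\}$ and $B=\{i:I_i\neq\{0\}\}$, note that each vertex of $Q$ is the indicator vector of some $E\in[A,B]$, and observe that a vertex $\chi_E$ lies in some $|[C,D]|$ with $[C,D]\subseteq\mF$ only if $E\in[C,D]\subseteq\mF$; hence $[A,B]\subseteq\mF$ and $Q=|[A,B]|$. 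However, the proof the paper actually \emph{writes out} for Corollary~\ref{cor:euler-poincare} is different: it is the elementary argument in Section~\ref{sec:properties-simply-rooted} via Lemma~\ref{lemma:rooted-euler-poincare}, which shows that for each fixed non-empty $A\in\mF$ the alternating sum over cubes $[C,A]$ with top set $A$ vanishes (by a binomial identity involving $\phi(A)$), and then sums these local identities over all $A$. That elementary route is strictly more informative --- it decomposes the global identity~\eqref{eq:euler-poincare} into $2^n$ local ones and makes no appeal to homology or to Theorem~\ref{thm:acyclic} at all, so it also serves as an independent consistency check on the main theorem --- whereas your argument is shorter but inherits all the work hidden inside Theorem~\ref{thm:acyclic}.
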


 We provide an elementary proof of Corollary~\ref{cor:euler-poincare} in Section~\ref{sec:properties-simply-rooted} using Lemma~\ref{lemma:rooted-euler-poincare}. As depicted by Lemma~\ref{lemma:rooted-euler-poincare}, Equation~\ref{eq:euler-poincare}, is the sum of $2^n$ equations each corresponding to a set $A \in 2^{[n]}$.

 While our results do not improve upon Conjecture~\ref{conj:frankl}, in this paper, we give a topological insight regarding simply rooted families of sets. Informally speaking, Theorem~\ref{thm:acyclic} says that simply rooted families, containing the empty set, are `simple in nature' from the point of view of homology. We hope that studying simply rooted families further with this perspective will lead to more insights regarding them and in turn, regarding union-closed families of sets.

 In Section~\ref{sec:basic-notions}, we define the basic terminology required for this work. In particular, we give the definition of the specific cubical set $X(\mathcal F)$ under consideration. We also give examples showing that one can not drop either assumption in Theorem~\ref{thm:acyclic}. At the end of this section, we give the idea of our proof of Theorem~\ref{thm:acyclic}. In Section~\ref{sec:properties-simply-rooted}, we prove some properties of simply-rooted families of sets needed for the proof of Theorem~\ref{thm:acyclic}. Using these properties, we give an elementary proof of Corollary~\ref{cor:euler-poincare}. In Section~\ref{sec:preliminary-lemmas}, we prove some preliminary Lemmas involving cubical sets needed for the proof of Theorem~\ref{thm:acyclic}. In this section, we show that the cubical set we associate to a given family of sets behaves well with intersections of families of sets (Lemma~\ref{lemma:cubical-realization-commutes-intersection}). We also show that a particular class of cubical sets is always acyclic (Lemma~\ref{lemma:ayclic-cube-minus-points}), which will be needed for the proof of Theorem~\ref{thm:acyclic}. Finally, in Section~\ref{sec:proof-main-theorem}, we prove Theorem~\ref{thm:acyclic}.

\section{Cubical setting}\label{sec:cubical-setting}
\subsection{Basic notions}\label{sec:basic-notions}

Let $[n] = \{1, \dots, n\}$ and $2^{[n]}$ be its power set. For $A, B \in 2^{[n]}$, we denote $[A, B] = \{C \in 2^{[n]}: A\subseteq C \subseteq B\}$. When $A = \{i\}$, we use the shorter notation $[i, B]$ to mean $[\{i\}, B]$. A family of sets $\mathcal F \subseteq 2^{[n]}$ is said to be \textit{union-closed} if for every $A, B \in \mathcal F, A \cup B \in \mathcal F$. 

\begin{defn}\label{def:simply-rooted}
    (see \cite{balla2013union}) A family $\mathcal F \subseteq 2^{[n]}$ is said to be \textit{simply rooted} if for every non-empty $A \in \mathcal F$, there is an $i \in A$ such that $[i, A] \subseteq 2^{[n]}$.
\end{defn} 

The following result relates union-closed families and simply rooted families of sets.

\begin{prop}\label{prop:union-closed-equivalent-simply-rooted}
    \textup{(see \cite{balla2013union})} Let $\mathcal F \subseteq 2^{[n]}$ be a family of sets. Then $\mathcal F$ is union-closed if and only if $2^{[n]} \setminus \mathcal F$ is simply rooted.
\end{prop}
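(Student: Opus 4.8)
The plan is to prove both implications by contraposition, in each case exploiting a single observation: if $A$ is nonempty and $\{C_i\}_{i\in A}$ is a family of subsets of $A$ with $i\in C_i$ for each $i$, then $\bigcup_{i\in A}C_i = A$. Write $\mG := 2^{[n]}\setminus\mF$. Recall that $\mG$ being simply rooted means that for every nonempty $A\in\mG$ there is some $i\in A$ with $[i,A]\subseteq\mG$.

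For the forward direction, I assume $\mF$ is union-closed and let $A\in\mG$ be nonempty; I must produce $i\in A$ with $[i,A]\subseteq\mG$. Arguing by contradiction, suppose that for every $i\in A$ there is a set $C_i$ with $i\in C_i\subseteq A$ but $C_i\notin\mG$, i.e.\ $C_i\in\mF$. Since $A\neq\emptyset$ this collection is nonempty, so $\bigcup_{i\in A}C_i\in\mF$ by union-closedness; but $C_i\subseteq A$ forces the union into $A$ while $i\in C_i$ forces $A$ into the union, whence $\bigcup_{i\in A}C_i = A$ and therefore $A\in\mF$, contradicting $A\in\mG$. Hence some $i\in A$ has $[i,A]\subseteq\mG$, so $\mG$ is simply rooted.

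For the converse, I assume $\mG$ is simply rooted and suppose, for contradiction, that $\mF$ is not union-closed: there are $A,B\in\mF$ with $U := A\cup B\notin\mF$, i.e.\ $U\in\mG$. At least one of $A,B$ is nonempty (otherwise $U=\emptyset=A\in\mF$), so $U\neq\emptyset$, and simple-rootedness of $\mG$ yields $i\in U$ with $[i,U]\subseteq\mG$. Since $i\in A\cup B$, without loss of generality $i\in A$; then $i\in A\subseteq U$ gives $A\in[i,U]\subseteq\mG$, contradicting $A\in\mF$. Hence $\mF$ is union-closed. The only points requiring care are the empty-set bookkeeping (making sure the unions are over nonempty collections and that $U\neq\emptyset$), which is routine, so I do not expect a substantial obstacle.
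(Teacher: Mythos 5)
Your proof is correct: the forward direction correctly negates simple-rootedness to produce, for each $i\in A$, a set $C_i\in\mF$ with $i\in C_i\subseteq A$, and then uses the (finitely iterated) union-closedness on the nonempty collection $\{C_i\}_{i\in A}$ to get $A=\bigcup_{i\in A}C_i\in\mF$, a contradiction; the converse is likewise sound, and you have handled the empty-set edge cases correctly. The paper itself gives no proof of this proposition (it simply cites Balla, Bollob\'as, and Eccles), but your argument is the standard one and matches the intended equivalence, reading the paper's evident typo $[i,A]\subseteq 2^{[n]}$ in Definition~\ref{def:simply-rooted} as $[i,A]\subseteq\mF$, which is how the rest of the paper uses it.
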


In this paper, we will deal with simply rooted families of sets. Following \cite{kaczynski2004computational}, we define:

\begin{defn}
    (see \cite{kaczynski2004computational}) A set $A \subseteq \mathbb R^n$ is called an \textit{elementary cube} if $A = I_1 \times \dots \times I_n$ where each $I_i = [a, b]$ such that $a, b \in \mathbb Z$ and $b-a \in \{0, 1\}$. A set $X \subseteq \mathbb R^n$ is said to be a \textit{cubical set} if it is the union of a finitely many elementary cubes.
\end{defn}

\begin{defn}\label{def:cubes-of-family} Given a family of sets $\mathcal F \subseteq 2^{[n]}$, we define
$\mathcal C(\mathcal F) = \{[A, B] \subseteq 2^{[n]} : [A, B] \subseteq \mF\}$ to be the \textit{set of cubes} of $\mathcal F$. We define $\mathcal C_k(\mathcal F) = \{[A, B] \in \mathcal C(\mathcal F) : |B \setminus A| = k\}$.
\end{defn}

\begin{defn}\label{def:cubes-realization}
    For a cube $[A, B] \subseteq 2^{[n]}$, with $A \subseteq B$, we define $I_1 \times \dots \times I_n \subseteq \mathbb R^n$ to be the \textit{geometric realization} of $[A, B]$ where $I_i =  
        \begin{cases}
            \{1\} \text{ if } i \in A \\
             [0, 1] \text{ if } i\in B\setminus A \\
             \{0\} \text{ if } i \in B^c
        \end{cases}$. We denote it by $|[A, B]|$. If $A \nsubseteq B$, we define its geometric realization to be the empty subspace of $\mathbb R^n$.
\end{defn}

\begin{defn}\label{family-realization}
     For a family of sets $\mathcal F \subseteq 2^{[n]}$, we define the \textit{geometric realization of $\mathcal F$} to be the cubical set 
        $$X(\mathcal F) = \bigcup_{[A,B] \in \mathcal C(\mathcal F)} |[A, B]|\subseteq \mathbb R^n.$$

\end{defn}

\begin{figure}[htbp]
    \centering
    % First row: Image (a) and Image (b)
    \begin{subfigure}[b]{0.6\textwidth}
        \centering
        \includegraphics[width=\textwidth]{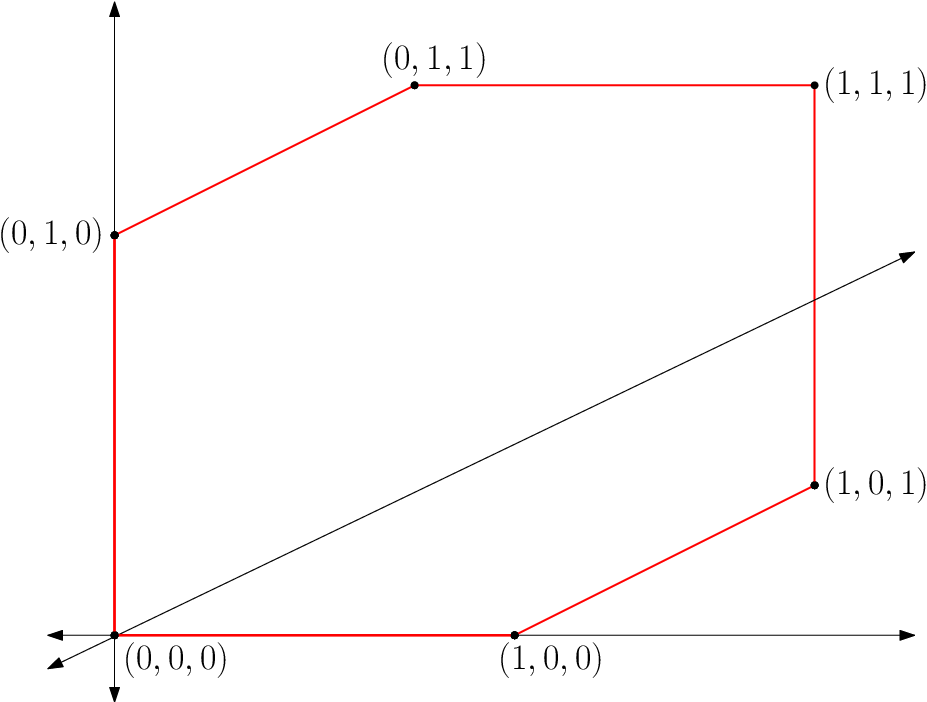} % Replace with your image path
        \caption{$X(\mF_1)$.}
        \label{fig:sub-a}
    \end{subfigure}
    \hfill
    \begin{subfigure}[b]{0.6\textwidth}
        \centering
        \includegraphics[width=\textwidth]{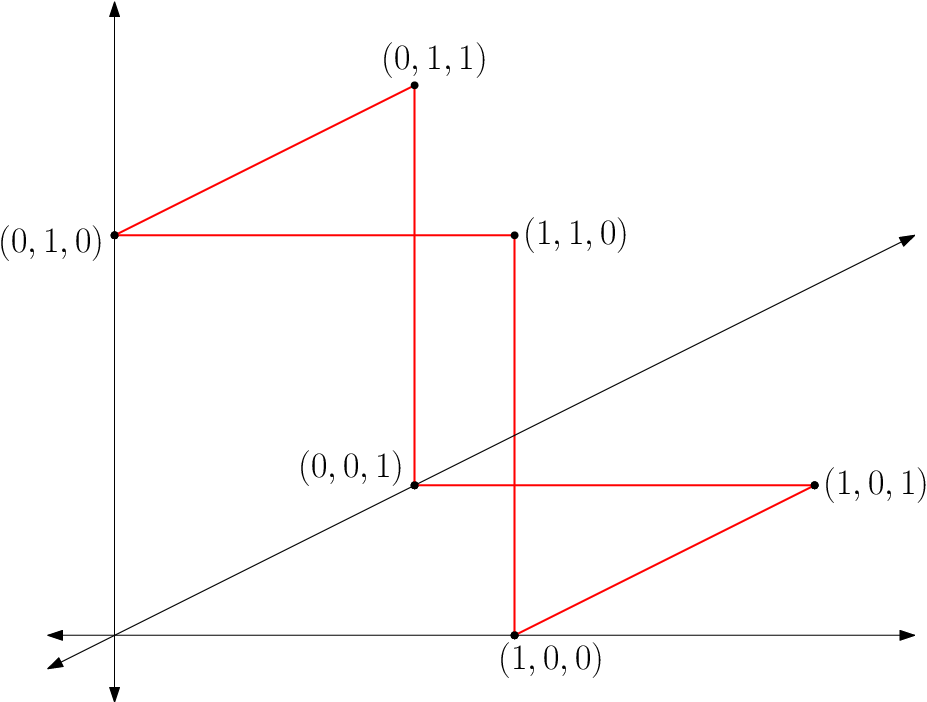} % Replace with your image path
        \caption{$X(\mF_2)$.}
        \label{fig:sub-b}
    \end{subfigure}
    
    \vskip\baselineskip % Space between the two rows
    
    % Second row: Image (c)
    \begin{subfigure}[b]{0.6\textwidth}
        \centering
        \includegraphics[width=\textwidth]{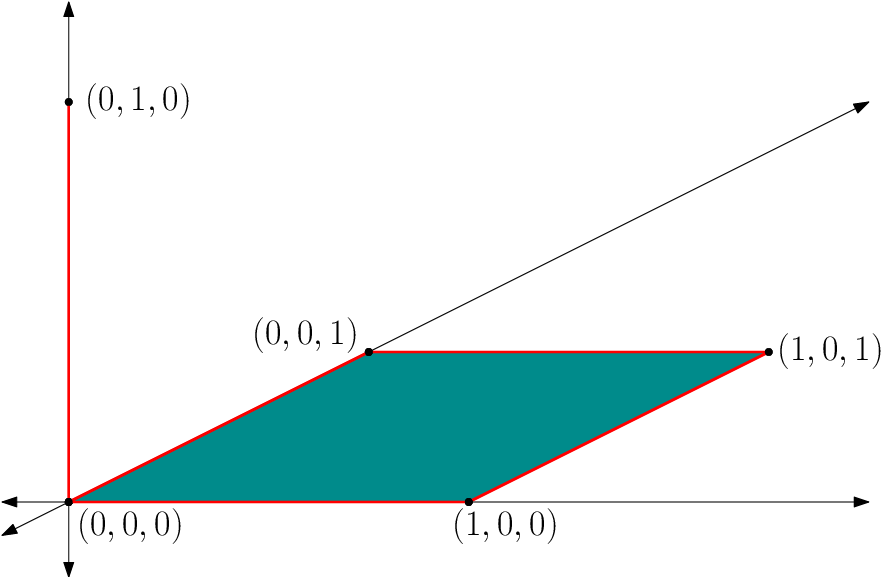} % Replace with your image path
        \caption{$X(\mF_3)$.}
        \label{fig:sub-c}
    \end{subfigure}
    
    \caption{This figure depicts $X(\mathcal F)$ for various families of sets $\mathcal F$.}
    \label{fig:main}
\end{figure}

\begin{defn}\label{def:acylic-cubical-set}
    (see \cite{kaczynski2004computational}) A cubical set $X \subseteq \mathbb R^n$ is said to be \textit{acyclic} if \begin{enumerate}
        \item $X$ is non-empty and connected,
        \item $H_i(X)$ is trivial for every $i \geq 1$, where $H_i(X)$ is $i^{\text{th}}$ cubical homology group of $X$.
    \end{enumerate}
\end{defn}

\begin{example}
    \textup{In Figure~\ref{fig:main}, we demonstrate $X(\mF)$ for various families of sets $\mF \subseteq 2^{[3]}$: 
    \begin{enumerate}
        \item In Figure~\ref{fig:sub-a}, we have $\mF_1 = \{\emptyset, \{1\}, \{2\}, \{1, 3\}, \{2, 3\}, \{1, 2, 3\}\}$ which is not a simply rooted family of sets. Note that $X(\mF_1)$ is homeomorphic to the circle $S^1$ in this case. This shows that the simply rooted condition is needed in Theorem~\ref{thm:acyclic}.
        \item In Figure~\ref{fig:sub-b}, we have $\mF_2 = \{\{1\}, \{2\}, \{3\}, \{1, 2\}, \{1, 3\}, \{2, 3\}\}$ which is a simply rooted family of sets. Note that in this case as well, $X(\mF_2)$ is homeomorphic to $S^1$. This shows that the condition $\emptyset \in \mF$ is necessary in Theorem~\ref{thm:acyclic}.
        \item In Figure~\ref{fig:sub-c}, we have $\mF_3 = \{\emptyset, \{1\}, \{2\}, \{3\}, \{1, 3\}\}$ which is a simply rooted family of sets and satisfies $\emptyset \in \mF_3$. We note that $X(\mathcal F_3)$ is acyclic in this case.
    \end{enumerate}}
\end{example}

We will use the following results from \cite{kaczynski2004computational}.

\begin{theorem}\label{thm:ayclic-by-union-of-two}
    \textup{(see \cite{kaczynski2004computational})} Assume $X, Y \subseteq \mathbb R^n$ are cubical sets. If $X, Y$ and $X \cap Y$ are acyclic, then $X \cup Y$ is acyclic.
\end{theorem}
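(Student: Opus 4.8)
The plan is the standard Mayer--Vietoris argument for cubical homology; since the statement is quoted from \cite{kaczynski2004computational}, I will only lay out the main steps. Condition (1) of Definition~\ref{def:acylic-cubical-set} is purely point-set: as $X$ and $Y$ are acyclic they are non-empty and connected, and $X\cap Y$ is non-empty, so fixing a point $p\in X\cap Y$, every point of $X\cup Y$ lies in one of the two connected sets $X$, $Y$, each containing $p$; hence $X\cup Y$ is connected, and clearly $X\cup Y\supseteq X\neq\emptyset$. So it remains to kill the higher homology.

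The key combinatorial input is that if $Q$ is an elementary cube with $Q\subseteq X\cup Y$, then $Q\subseteq X$ or $Q\subseteq Y$. Indeed, $X\cap Q$ and $Y\cap Q$ are each a union of faces of $Q$, because the intersection of an elementary cube with $Q$ is a face of $Q$; since $(X\cap Q)\cup(Y\cap Q)=Q$ while the union of all \emph{proper} faces of $Q$ is $\partial Q\subsetneq Q$, the top face $Q$ itself must occur among the faces coming from $X$ or from $Y$. Consequently the elementary cubes contained in $X\cup Y$ are exactly those contained in $X$ together with those contained in $Y$, and the elementary cubes contained in $X\cap Y$ are exactly those contained in both. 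Writing $C_*(\cdot)$ for the cubical chain complex (whose generators are the elementary cubes contained in the set), this yields a short exact sequence of chain complexes
\[
0 \longrightarrow C_*(X\cap Y) \xrightarrow{\;c\mapsto(c,-c)\;} C_*(X)\oplus C_*(Y) \xrightarrow{\;(a,b)\mapsto a+b\;} C_*(X\cup Y) \longrightarrow 0,
\]
the maps being induced by inclusions and compatible with the cubical boundary operator.

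Augmenting each complex and applying the zig-zag lemma produces the reduced Mayer--Vietoris long exact sequence
\[
\cdots \to \tilde H_k(X\cap Y) \to \tilde H_k(X)\oplus \tilde H_k(Y) \to \tilde H_k(X\cup Y) \to \tilde H_{k-1}(X\cap Y) \to \cdots.
\]
Because $X$, $Y$, and $X\cap Y$ are acyclic, all of $\tilde H_k(X)$, $\tilde H_k(Y)$, $\tilde H_k(X\cap Y)$ vanish for every $k$ (connectedness handles $\tilde H_0$ and acyclicity handles $k\ge 1$). Exactness then forces $\tilde H_k(X\cup Y)=0$ for all $k$, and in particular $H_k(X\cup Y)$ is trivial for every $k\ge 1$. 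Combined with the first step, this shows $X\cup Y$ is acyclic.

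The only genuinely delicate point is setting up the short exact sequence of cubical chain complexes: one must verify the combinatorial identification of elementary cubes above and check that the inclusion-induced maps commute with the cubical boundary maps (so that signs work out). Both are routine facts about cubical chains established in \cite{kaczynski2004computational}; everything downstream is formal homological algebra, so I expect no further obstacle.
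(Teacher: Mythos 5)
Your argument is correct: the key combinatorial step (an elementary cube contained in $X\cup Y$ must lie entirely in $X$ or entirely in $Y$, since $X\cap Q$ and $Y\cap Q$ are unions of faces of $Q$ and the proper faces cannot cover $Q$) is exactly what makes the Mayer--Vietoris short exact sequence of cubical chain complexes work, and the rest is formal. The paper itself gives no proof --- it imports the theorem from \cite{kaczynski2004computational} --- and the proof there is precisely this reduced Mayer--Vietoris argument, so your route coincides with the cited one.
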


\begin{defn}\label{def:start-shaped-cubical-set}
    (see \cite{kaczynski2004computational}) A cubical set $X \subseteq \mathbb R^n$ is \textit{star-shaped} with respect to a point $x \in \mathbb R^n$ if $X$ is the union of a finite number of elementary cubes each of which contains $x$.
\end{defn}

\begin{prop}\label{prop:star-shaped-acylic}
    \textup{(see \cite{kaczynski2004computational})} Every star-shaped set is acylic.
\end{prop}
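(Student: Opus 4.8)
The plan is to induct on the number of elementary cubes appearing in a star-shaped representation of $X$, applying Theorem~\ref{thm:ayclic-by-union-of-two} at each step. Fix a point $x \in \mathbb R^n$ with respect to which $X$ is star-shaped, and write $X = Q_1 \cup \dots \cup Q_m$ where each $Q_j$ is an elementary cube containing $x$. Since $x \in X$, non-emptiness of $X$ is immediate; the content of the statement is the connectedness and the vanishing of the higher cubical homology, and both of these are exactly the conclusion packaged in the word ``acyclic'' that Theorem~\ref{thm:ayclic-by-union-of-two} propagates through unions.

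The base case is $m = 1$: a single elementary cube $Q = I_1 \times \dots \times I_n$ is (affinely homeomorphic to) a standard cube $[0,1]^k$, hence non-empty, connected, and with trivial reduced cubical homology, i.e. acyclic. This is the elementary building-block fact recorded in \cite{kaczynski2004computational}.

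For the inductive step, assume every cubical set that is star-shaped with respect to some point via at most $m-1$ elementary cubes is acyclic, and consider $X = Q_1 \cup \dots \cup Q_m$ as above. Set $A = Q_1 \cup \dots \cup Q_{m-1}$, so $X = A \cup Q_m$. Then $A$ is star-shaped with respect to $x$ via $m-1$ cubes and is acyclic by the inductive hypothesis, while $Q_m$ is acyclic by the base case. For the intersection, note $A \cap Q_m = \bigcup_{i=1}^{m-1}(Q_i \cap Q_m)$, and I would use the coordinatewise observation that the intersection of two elementary cubes, when non-empty, is again an elementary cube: writing $Q_i = \prod_{t} I_t$ and $Q_m = \prod_{t} J_t$ with each $I_t, J_t$ an interval of integer endpoints and length $0$ or $1$, we get $Q_i \cap Q_m = \prod_{t}(I_t \cap J_t)$, where each $I_t \cap J_t$ is non-empty (it contains the $t$-th coordinate of $x$) and is again an interval of integer endpoints and length $0$ or $1$. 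Since each $Q_i \cap Q_m$ contains $x$, the set $A \cap Q_m$ is star-shaped with respect to $x$ via at most $m-1$ elementary cubes, hence acyclic by the inductive hypothesis. With $A$, $Q_m$, and $A \cap Q_m$ all acyclic, Theorem~\ref{thm:ayclic-by-union-of-two} gives that $X = A \cup Q_m$ is acyclic, closing the induction.

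I expect the only subtle point to be the claim that the intersection of two elementary cubes is an elementary cube, together with the bookkeeping that this keeps us inside the class of star-shaped sets built from strictly fewer cubes, so that the (strong) induction on $m$ is legitimate. Everything else is a routine invocation of Theorem~\ref{thm:ayclic-by-union-of-two}. It is worth checking the degenerate coordinatewise cases explicitly, e.g. $\{a\} \cap [a, a+1] = \{a\}$ and $[a, a+1] \cap [a+1, a+2] = \{a+1\}$: in each case the result is still a legitimate elementary interval, so no configuration of the $I_t, J_t$ causes a problem.
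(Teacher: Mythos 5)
The paper itself does not prove this proposition: it is imported as a black box from \cite{kaczynski2004computational}, so there is no in-paper argument to compare against. Your proof is correct, and it is essentially the textbook argument from that reference: strong induction on the number $m$ of elementary cubes in a star-shaped representation, with Theorem~\ref{thm:ayclic-by-union-of-two} doing the work at each step. The two points you flag as subtle are indeed the only ones needing care, and you handle both correctly: (i) coordinatewise, the intersection of two integer intervals of length $0$ or $1$ is again such an interval whenever it is non-empty, and non-emptiness is guaranteed here because both cubes contain $x$, so each $Q_i \cap Q_m$ is an elementary cube containing $x$; (ii) consequently $A \cap Q_m = \bigcup_{i<m}(Q_i \cap Q_m)$ is star-shaped with respect to $x$ via at most $m-1$ cubes, so the strong induction hypothesis applies to it as well as to $A$. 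The only inputs you take on faith are the base case (a single elementary cube is acyclic) and Theorem~\ref{thm:ayclic-by-union-of-two} itself, both legitimately citable from \cite{kaczynski2004computational}; the base case is not circular, since it is the genuinely elementary instance of the statement (convexity, or a direct chain-homotopy argument, settles it). One pedantic remark: the induction must start at $m=1$, i.e., Definition~\ref{def:start-shaped-cubical-set} should be read as requiring at least one cube, since the empty union is vacuously a union of cubes containing $x$ but is not acyclic; your proof implicitly and correctly assumes this.
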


Theorem~\ref{thm:ayclic-by-union-of-two} is the main tool we use for the proof of Theorem~\ref{thm:acyclic}. Our idea of proving Theorem~\ref{thm:acyclic} is: given a simply rooted family containing the empty set, say $\mF$, and a set $A \in \mF$ of maximum cardinality amongst the members of $\mF$, we write $X (\mF)  = X(\mF \setminus \{A\}) \cup X(\mF_A)$ (where $\mF_A$ is defined in Definition~\ref{def:family-of-sets-at-A}). We assume that $X(\mF \setminus \{A\})$ is acyclic by the induction hypothesis. We show that $X(\mF_A)$ is star-shaped and hence it is acyclic using Proposition~\ref{prop:star-shaped-acylic}. We show that the intersection of these two cubical sets is one among the class of cubical sets we show are acyclic in Lemma~\ref{lemma:ayclic-cube-minus-points}. Then, we are done using Theorem~\ref{thm:ayclic-by-union-of-two}.

\subsection{Some Properties of Simply-Rooted Families of Sets}\label{sec:properties-simply-rooted}

In this section, we explore some properties of simply rooted families of sets. Using these properties, we give an elementary proof of Corollary~\ref{cor:euler-poincare}. Using Corollary~\ref{cor:euler-poincare}, we also give the Euler-characteristic of $X(\mF)$ where $\mF$ is a simply rooted family of sets such that $\emptyset \notin \mF$.  We note here that among the results of this section, we will only need Proposition~\ref{prop:family-of-sets-at-A-equivalent} for the proof of Theorem~\ref{thm:acyclic}.

\begin{defn}\label{def:adjoint-of-inclusion}
    Let $\mathcal F \subseteq 2^{[n]}$ be a simply rooted family of sets. We define $\phi : \mathcal F \rightarrow (2^{[n]} \setminus \mathcal F \cup \{ \emptyset\})$ by
    $$\phi : A \mapsto \bigcup_{B \in (2^{[n]} \setminus \mathcal F) \cap [\emptyset, A]} B.$$
\end{defn}

We note that $\phi$ is well-defined because $2^{[n]} \setminus \mathcal F$ is union-closed.

\begin{prop}\label{prop:roots-equivalent-complement-image-adjoint}
    Let $\mathcal F \subseteq 2^{[n]}$ be a simply rooted family of sets. Let $A \in \mathcal F$ be a non-empty set. Then,
    $$\{ i \in A : [i, A] \subseteq \mathcal F\} = A \setminus \phi(A).$$
\end{prop}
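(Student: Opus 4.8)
The plan is to prove the two inclusions $\{i \in A : [i,A] \subseteq \mathcal F\} \subseteq A \setminus \phi(A)$ and $A \setminus \phi(A) \subseteq \{i \in A : [i,A] \subseteq \mathcal F\}$ separately, unwinding the definition of $\phi(A) = \bigcup_{B \in (2^{[n]} \setminus \mathcal F) \cap [\emptyset, A]} B$ as the union of all subsets of $A$ lying in the complement $2^{[n]} \setminus \mathcal F$.

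For the first inclusion, I would argue by contraposition: suppose $i \in A$ but $i \in \phi(A)$. Then by definition of $\phi(A)$ there exists $B \subseteq A$ with $B \notin \mathcal F$ and $i \in B$. Since $i \in B \subseteq A$, we have $B \in [i, A]$, and $B \notin \mathcal F$ witnesses that $[i,A] \nsubseteq \mathcal F$. Hence $i \notin \{i \in A : [i,A] \subseteq \mathcal F\}$, which gives the contrapositive of the desired inclusion.

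For the reverse inclusion, take $i \in A \setminus \phi(A)$; I must show $[i,A] \subseteq \mathcal F$. Suppose not, so there is some $C$ with $i \in C \subseteq A$ and $C \notin \mathcal F$. Then $C$ is one of the sets in the union defining $\phi(A)$, so $C \subseteq \phi(A)$, and in particular $i \in C \subseteq \phi(A)$, contradicting $i \notin \phi(A)$. Therefore every $C \in [i,A]$ lies in $\mathcal F$, i.e. $[i,A] \subseteq \mathcal F$, so $i$ belongs to the left-hand set.

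I expect the main (and really the only) subtlety to be keeping the quantifiers straight and making sure the union defining $\phi(A)$ is interpreted correctly — namely that $\phi(A)$ collects \emph{all} complement-sets below $A$, so that "$i \notin \phi(A)$" is precisely the statement "no subset of $A$ containing $i$ lies outside $\mathcal F$." Once that is pinned down, both inclusions are immediate; well-definedness of $\phi$ (used implicitly) is already noted in the excerpt since $2^{[n]} \setminus \mathcal F$ is union-closed. No appeal to the simply rooted hypothesis is actually needed for this particular equality, though it is of course what makes the statement interesting when combined with Definition~\ref{def:simply-rooted}.
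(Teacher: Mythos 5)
Your proof is correct and follows essentially the same route as the paper's: both inclusions are established by directly unwinding the definition of $\phi(A)$ as the union of all complement-sets below $A$ (your first inclusion is phrased as a contrapositive where the paper argues directly, but the content is identical). Your closing observation that the simply-rooted hypothesis is not needed for the set equality itself, only for $\phi$ to land in $2^{[n]} \setminus \mathcal F \cup \{\emptyset\}$, is also accurate.
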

\begin{proof}
    Let $[i, A] \subseteq \mathcal F$. Let $B \in (2^{[n]} \setminus \mathcal F) \cap [\emptyset, A]$. Since $[i, A] \subseteq \mathcal F$, we see that $i \notin B$. Thus, we have $i \notin \phi(A)$ and consequently, $ i \in A \setminus \phi(A)$. We conclude that $\{i \in A : [i, A] \subseteq \mathcal F\} \subseteq A \setminus \phi(A)$.
    
    On the other hand, suppose that $i \in A \setminus \phi(A)$. Let $B \in [i, A]$. Suppose that $B \in 2^{[n]} \setminus \mathcal F$. This means that $B \subseteq \phi(A)$ and hence, $ i \in \phi(A)$. This leads to a contradiction. Consequently, we obtain that $B \in \mathcal F$. This allows us to conclude that $\{i \in A: [i, A] \subseteq \mathcal F\} \supseteq A \setminus \phi(A)$. This completes the proof.
\end{proof}

\begin{defn}\label{def:family-of-sets-at-A}
    Let $\mathcal F \subseteq 2^{[n]}$ be a family of sets. Let $A \in \mathcal F$. We define $\mathcal F_A = \{B \in \mathcal F : [B, A] \subseteq \mathcal F\}$.
\end{defn}

\begin{prop}\label{prop:family-of-sets-at-A-equivalent}
    Let $\mathcal F \subseteq 2^{[n]}$ be a simply rooted family of sets such that $\phi \in \mathcal F$. Let $A \in \mathcal F$ such that $\phi(A) \neq \emptyset$. Then, 
    $$\mathcal F_A = \bigcup_{[i, A] \subseteq \mF} [i, A].$$
\end{prop}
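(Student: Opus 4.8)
The plan is to prove the two set inclusions separately, with Proposition~\ref{prop:roots-equivalent-complement-image-adjoint} doing the real work. First I would record two preliminary observations. Since $\phi(A)\neq\emptyset$, the union defining $\phi(A)$ runs over a non-empty family of sets, so there is a non-empty $D\in(2^{[n]}\setminus\mathcal F)\cap[\emptyset,A]$; hence $A\supseteq D\neq\emptyset$, so $A$ is non-empty, and moreover $\phi(A)$ — being a union of members of the union-closed family $2^{[n]}\setminus\mathcal F$ over a non-empty index set — lies in $2^{[n]}\setminus\mathcal F$. Also $\phi(A)\subseteq A$ by construction. Granting these, the inclusion $\bigcup_{[i,A]\subseteq\mathcal F}[i,A]\subseteq\mathcal F_A$ is immediate: if $C\in[i,A]$ with $[i,A]\subseteq\mathcal F$ then $C\in\mathcal F$, and since $i\in C$ we get $[C,A]\subseteq[i,A]\subseteq\mathcal F$, so $C\in\mathcal F_A$.

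The substantive direction is $\mathcal F_A\subseteq\bigcup_{[i,A]\subseteq\mathcal F}[i,A]$. Fix $B\in\mathcal F_A$; here I read this as $B\subseteq A$ and $[B,A]\subseteq\mathcal F$ (so that $[B,A]$ is a non-empty interval containing both $B$ and $A$). The crux is that $B\not\subseteq\phi(A)$: if instead $B\subseteq\phi(A)$ then, since also $\phi(A)\subseteq A$, we would have $\phi(A)\in[B,A]\subseteq\mathcal F$, contradicting $\phi(A)\in 2^{[n]}\setminus\mathcal F$. So choose $i\in B$ with $i\notin\phi(A)$. Since $A$ is non-empty and $i\in B\subseteq A$, Proposition~\ref{prop:roots-equivalent-complement-image-adjoint} yields $i\in A\setminus\phi(A)=\{j\in A:[j,A]\subseteq\mathcal F\}$, so $[i,A]\subseteq\mathcal F$; and $B\in[i,A]$ because $i\in B\subseteq A$. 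Therefore $B\in\bigcup_{[i,A]\subseteq\mathcal F}[i,A]$, as required.

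I do not anticipate a serious obstacle: the whole argument hinges on the single observation that $\phi(A)$ is itself a set outside $\mathcal F$ trapped inside the interval $[B,A]$, after which Proposition~\ref{prop:roots-equivalent-complement-image-adjoint} converts ``$B$ meets $A\setminus\phi(A)$'' into ``$B$ lies in some $[i,A]\subseteq\mathcal F$'' (the half of that proposition used here is the one-line fact that any $C\subseteq A$ with $C\notin\mathcal F$ satisfies $C\subseteq\phi(A)$). The one point that needs care is the reading of $\mathcal F_A$ noted above: one must restrict attention to $B\subseteq A$, since for $B\not\subseteq A$ the interval $[B,A]$ is empty and the defining condition of $\mathcal F_A$ degenerates, which would break the stated identity.
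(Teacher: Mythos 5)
Your argument is correct and is essentially the paper's own proof: the easy inclusion is handled the same way, and the substantive direction is the same contradiction—observe that a member $B$ of $\mathcal F_A$ cannot lie inside $\phi(A)$ (else $\phi(A)\in[B,A]\subseteq\mathcal F$, contradicting $\phi(A)\in 2^{[n]}\setminus\mathcal F$), then apply Proposition~\ref{prop:roots-equivalent-complement-image-adjoint} to produce a root $i$ with $[i,A]\subseteq\mathcal F$ and $B\in[i,A]$. You spell out a few facts the paper leaves implicit ($\phi(A)\subseteq A$, $\phi(A)\notin\mathcal F$ when $\phi(A)\neq\emptyset$, and the tacit restriction of $\mathcal F_A$ to sets $B\subseteq A$), but these are clarifications of, not departures from, the paper's argument.
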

\begin{proof}
    It is obvious that $\bigcup_{[i, A] \subseteq \mF} [i, A] \subseteq \mF_A$. On the other hand, let $C \in \mF_A$, that is, $[C, A] \subseteq \mF$. We need to show that there is a $j \in \{i \in A : [i, A] \subseteq \mF\}$ such that $j \in C$. Suppose that, on the contrary, there is no such $j$. By Proposition~\ref{prop:roots-equivalent-complement-image-adjoint}, this means that $C \subseteq \phi(A)$. Since, $[C, A] \subseteq \mF$, we obtain that $\phi(A) \in \mF$, which is a contradiction. Thus, we obtain that $\mF_A \subseteq \bigcup_{[i, A] \subseteq \mF} [i, A]$, as required.
\end{proof}

\begin{lemma}\label{lemma:rooted-euler-poincare}
    Let $\mF$ be a simply rooted family of sets such that $\emptyset \in \mF$. Let $A \in \mathcal F$ be non-empty. Let $\mathcal C_k(\mathcal F, A) = \{[C, D] \in \mathcal C_k(\mathcal F) : D = A\}$. Then, we have,
    \begin{equation}\label{eq:rooted-euler-poincare}
        \sum_{k=0}^{|A|} (-1)^k |\mC_k(\mF, A)| = 0.
    \end{equation}
\end{lemma}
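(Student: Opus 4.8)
The plan is to give an elementary double-counting argument that reduces the alternating sum in Equation~\ref{eq:rooted-euler-poincare} to the standard identity $\sum_{T\subseteq U}(-1)^{|T|}=0$ on a Boolean lattice. The key observation is that a cube $[C,D]\in\mathcal C_k(\mathcal F)$ satisfies $D=A$ exactly when $C\subseteq A$, $|A\setminus C|=k$, and $[C,A]\subseteq\mathcal F$; by Definition~\ref{def:family-of-sets-at-A} the last condition says precisely $C\in\mathcal F_A$. Hence $|\mathcal C_k(\mathcal F,A)|=\#\{C\subseteq A:\ C\in\mathcal F_A,\ |A\setminus C|=k\}$, and the substitution $S=A\setminus C$ rewrites the left-hand side of Equation~\ref{eq:rooted-euler-poincare} as $\sum_{S\subseteq A,\ A\setminus S\in\mathcal F_A}(-1)^{|S|}$. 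So everything comes down to identifying which subsets of $A$ lie in $\mathcal F_A$, and this is where the simply rooted hypothesis enters through Propositions~\ref{prop:roots-equivalent-complement-image-adjoint} and~\ref{prop:family-of-sets-at-A-equivalent}.

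First I would dispose of the degenerate case $\phi(A)=\emptyset$. By Definition~\ref{def:adjoint-of-inclusion} together with $\emptyset\in\mathcal F$, having $\phi(A)=\emptyset$ forces every subset of $A$ to lie in $\mathcal F$, i.e. $[\emptyset,A]\subseteq\mathcal F$, so $\mathcal F_A$ contains all of $2^{A}$. The sum then becomes $\sum_{S\subseteq A}(-1)^{|S|}=(1-1)^{|A|}$, which is $0$ since $A\neq\emptyset$.

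For the main case $\phi(A)\neq\emptyset$, I would invoke Proposition~\ref{prop:family-of-sets-at-A-equivalent} to write $\mathcal F_A=\bigcup_{[i,A]\subseteq\mathcal F}[i,A]$; setting $R=\{i\in A:[i,A]\subseteq\mathcal F\}$, this says a subset $C\subseteq A$ lies in $\mathcal F_A$ iff $C\cap R\neq\emptyset$. Since $\mathcal F$ is simply rooted and $A$ is a non-empty member, $R\neq\emptyset$, and by Proposition~\ref{prop:roots-equivalent-complement-image-adjoint} we have $R=A\setminus\phi(A)$, so $A\setminus R=\phi(A)\neq\emptyset$. Translating through $S=A\setminus C$, the condition $C\cap R\neq\emptyset$ becomes $R\not\subseteq S$, so the sum equals $\sum_{S\subseteq A}(-1)^{|S|}-\sum_{R\subseteq S\subseteq A}(-1)^{|S|}$. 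The first term is $0$ because $A\neq\emptyset$; in the second, the substitution $S=R\sqcup T$ with $T\subseteq A\setminus R$ gives $(-1)^{|R|}\sum_{T\subseteq A\setminus R}(-1)^{|T|}$, which is $0$ because $A\setminus R\neq\emptyset$. Hence the whole expression vanishes.

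I do not anticipate a serious obstacle; the only point needing care is the case split on whether $\phi(A)$ is empty, which is forced because Proposition~\ref{prop:family-of-sets-at-A-equivalent} is only available when $\phi(A)\neq\emptyset$. Conceptually, $\mathcal F_A$ contains $\emptyset$ exactly when $\phi(A)=\emptyset$, and it is precisely that one extra lattice element that makes the second computation above break down in the degenerate case, which is why it must be handled separately. It is also worth noting that the hypothesis $\emptyset\in\mathcal F$ is used only in the degenerate case, to pass from $\phi(A)=\emptyset$ to $[\emptyset,A]\subseteq\mathcal F$.
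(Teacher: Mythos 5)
Your proof is correct and takes essentially the same route as the paper: both rephrase $|\mathcal C_k(\mathcal F,A)|$ as the number of $C\subseteq A$ in $\mathcal F_A$ with $|A\setminus C|=k$, identify $\mathcal F_A$ via Propositions~\ref{prop:roots-equivalent-complement-image-adjoint} and~\ref{prop:family-of-sets-at-A-equivalent}, and reduce the alternating sum to cancellations over Boolean intervals. The one substantive difference is that you explicitly split off the case $\phi(A)=\emptyset$ (equivalently $[\emptyset,A]\subseteq\mathcal F$), where Proposition~\ref{prop:family-of-sets-at-A-equivalent} is unavailable and $\emptyset\in\mathcal F_A$; the paper applies the count $\binom{|A|}{|A|-k}-\binom{|\phi(A)|}{|A|-k}$ uniformly, but when $\phi(A)=\emptyset$ that formula gives $0$ at $k=|A|$ while the true count is $1$ (it drops the empty set), so the paper's derivation is off by $(-1)^{|A|}$ in that edge case even though the lemma itself still holds. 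Your version patches this by treating the degenerate case directly as $\sum_{S\subseteq A}(-1)^{|S|}=0$, and your interval-based inclusion--exclusion $\sum_{S\subseteq A}-\sum_{R\subseteq S\subseteq A}$ in the main case is just a reformulation of the paper's binomial-coefficient cancellation.
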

\begin{proof}
    We note that, by definition, the set $\mC_k(\mF, A)$ is bijective to the set $\{B \in \mF_A : |B| = |A| - k\}$. By Proposition~\ref{prop:family-of-sets-at-A-equivalent} and Proposition~\ref{prop:roots-equivalent-complement-image-adjoint}, $\{B \in \mF_A : |B| = |A| - k\}$ consists all the subsets of $A$, of size $|A| - k$, that contain at least one element from $A \setminus \phi(A)$. The number of such subsets is $\binom{|A|}{|A| - k} - \binom{|\phi(A)|}{|A| - k}$. Consequently, we obtain:
    \begin{align*}
        \sum_{k=0}^{|A|} (-1)^k |\mC_k(\mF, A)| &= \sum_{k=0}^{|A|} (-1)^k \left\{\binom{|A|}{|A| - k} - \binom{|\phi(A)|}{|A| - k}\right\} \\
        &= \sum_{k=0}^{|A|} (-1)^k \binom{|A|}{|A| - k} - \sum_{k=0}^{|A|} (-1)^k\binom{|\phi(A)|}{|A| - k} \\
        &= \sum_{k=0}^{|A|} (-1)^k \binom{|A|}{k} - \sum_{t=0}^{|\phi(A)|} (-1)^t\binom{|\phi(A)|}{t} \\
        &= 0.
    \end{align*}
\end{proof}

\textit{Proof of Corollary~\ref{cor:euler-poincare}.} Using Lemma~\ref{lemma:rooted-euler-poincare}, Corollary~\ref{cor:euler-poincare} follows by adding Equation~\ref{eq:rooted-euler-poincare} for every non-empty $A \in 2^{[n]}$ and adding $1$ on both sides (corresponding to the empty set). $\square$ 

\begin{cor}
    Let $\mF \subseteq 2^{[n]}$ be a non-empty simply rooted family of sets such that $\emptyset \notin \mF$. For $k\geq 1$, let $c_k$ be the number of sets $A$ of $\mF$, of size $k$, such that for every $i \in A$, $[i, A] \subseteq \mF$ and let $c_0 = 1$. Then, the Euler-characteristic of $X(\mF)$ is given by $$1 - \sum_{k=0}^n (-1)^k c_k.$$
\end{cor}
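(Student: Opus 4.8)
The plan is to deduce this from Corollary~\ref{cor:euler-poincare} by adjoining the empty set and bookkeeping the difference. Write $\mathcal{F}' = \mathcal{F} \cup \{\emptyset\}$. The first step is to observe that $\mathcal{F}'$ is again simply rooted: Definition~\ref{def:simply-rooted} only constrains the non-empty members of a family, and $\mathcal{F}$ and $\mathcal{F}'$ have the same non-empty members; also $\emptyset \in \mathcal{F}'$. Hence Corollary~\ref{cor:euler-poincare} applies to $\mathcal{F}'$ and gives $\sum_{k=0}^n (-1)^k |\mathcal{C}_k(\mathcal{F}')| = 1$.

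The second step is to identify $\chi(X(\mathcal{F}))$ with $\sum_{k=0}^n (-1)^k |\mathcal{C}_k(\mathcal{F})|$. For any family $\mathcal{G} \subseteq 2^{[n]}$, I would check that an elementary cube $|[A,B]|$ lies in $X(\mathcal{G})$ if and only if $[A,B] \subseteq \mathcal{G}$: the ``if'' direction is immediate from Definition~\ref{family-realization}, and for ``only if'' one evaluates $X(\mathcal{G}) = \bigcup_{[E,D] \in \mathcal{C}(\mathcal{G})} |[E,D]|$ at the lattice vertex $p_C \in \{0,1\}^n$ associated to an arbitrary $C \in [A,B]$ and notes that $p_C \in |[E,D]|$ forces $E \subseteq C \subseteq D$, i.e.\ $C \in [E,D] \subseteq \mathcal{G}$. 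Thus the number of $k$-dimensional cells of $X(\mathcal{G})$ equals $|\mathcal{C}_k(\mathcal{G})|$, so $\chi(X(\mathcal{G})) = \sum_{k=0}^n (-1)^k |\mathcal{C}_k(\mathcal{G})|$; apply this with $\mathcal{G} = \mathcal{F}$.

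The third step is the bookkeeping relating $\mathcal{C}_k(\mathcal{F}')$ to $\mathcal{C}_k(\mathcal{F})$ and to the $c_k$. A cube $[A,B]$ lies in $\mathcal{C}(\mathcal{F}') \setminus \mathcal{C}(\mathcal{F})$ exactly when $\emptyset \in [A,B]$, i.e.\ $A = \emptyset$, and $[\emptyset,B] \subseteq \mathcal{F}'$. For $B = \emptyset$ this is the single extra $0$-cube $\{\emptyset\}$, matching $c_0 = 1$. For $B \neq \emptyset$, using the identity $[\emptyset, B] \setminus \{\emptyset\} = \bigcup_{i \in B}[i,B]$ (a non-empty subset of $B$ is precisely a subset containing some $i \in B$), the condition $[\emptyset,B] \subseteq \mathcal{F}'$ is equivalent to ``$[i,B] \subseteq \mathcal{F}$ for every $i \in B$'', which also forces $B \in \mathcal{F}$; so the number of such $B$ of size $k$ is exactly $c_k$. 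Hence $|\mathcal{C}_k(\mathcal{F}')| = |\mathcal{C}_k(\mathcal{F})| + c_k$ for every $k \geq 0$. Substituting into the identity of the first step yields $1 = \sum_k (-1)^k |\mathcal{C}_k(\mathcal{F})| + \sum_k (-1)^k c_k = \chi(X(\mathcal{F})) + \sum_{k=0}^n (-1)^k c_k$, which rearranges to the claimed formula $1 - \sum_{k=0}^n (-1)^k c_k$.

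I do not anticipate a genuine obstacle; the only points needing care are making the ``only if'' half of the face characterization in the second step precise (so that $\chi(X(\mathcal{G})) = \sum_k (-1)^k|\mathcal{C}_k(\mathcal{G})|$ is fully justified), and the combinatorial observation that $\bigcup_{i \in B}[i,B]$ is exactly the collection of non-empty subsets of $B$, which is what makes the ``new'' cubes of $\mathcal{F}'$ correspond bijectively to the sets enumerated by the $c_k$.
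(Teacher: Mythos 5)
Your proposal is correct and takes essentially the same route as the paper: adjoin $\emptyset$, apply Corollary~\ref{cor:euler-poincare} to $\mathcal F \cup \{\emptyset\}$, and account for the extra cubes $[\emptyset,B]$, which are exactly those enumerated by the $c_k$. Your write-up is somewhat more careful than the paper's one-liner, explicitly justifying the identification $\chi(X(\mathcal F))=\sum_k(-1)^k|\mathcal C_k(\mathcal F)|$ and treating the $k=0$ case, both of which the paper leaves implicit.
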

\begin{proof}
    The result follows by using Corollary~\ref{cor:euler-poincare} on $\mF \cup \{\emptyset\}$ and noting that $\mC_k(\mF) \subseteq \mC_k(\mF \cup \{\emptyset))$ and that $\mC_k(\mF \cup \{\emptyset\}) \setminus \mC_k(\mF) = \{A \in \mF: |A| = k, [\phi, A] \subseteq \mF \cup \{\emptyset\}\}$.
\end{proof}

\subsection{Preliminary Lemmas}\label{sec:preliminary-lemmas}

In this subsection we prove some preliminary results needed for the proof of Theorem~\ref{thm:acyclic}.
\begin{prop}\label{prop:cubes-commute-intersection}
    Let $\mathcal F, \mathcal G \subseteq 2^{[n]}$ be family of sets. Then $\mathcal C(\mathcal F) \cap \mathcal C(\mathcal G) = \mathcal C(\mathcal F \cap \mathcal G)$.
\end{prop}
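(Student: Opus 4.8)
The plan is to prove the set equality $\mathcal C(\mathcal F) \cap \mathcal C(\mathcal G) = \mathcal C(\mathcal F \cap \mathcal G)$ by double inclusion, unwinding Definition~\ref{def:cubes-of-family} in both directions. Recall that $[A,B] \in \mathcal C(\mathcal H)$ means precisely that $A \subseteq B$ and every set $C$ with $A \subseteq C \subseteq B$ lies in $\mathcal H$, i.e.\ $[A,B] \subseteq \mathcal H$ as a collection of subsets of $[n]$.

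For the forward inclusion, suppose $[A,B] \in \mathcal C(\mathcal F) \cap \mathcal C(\mathcal G)$. Then $A \subseteq B$, and $[A,B] \subseteq \mathcal F$ as well as $[A,B] \subseteq \mathcal G$; hence $[A,B] \subseteq \mathcal F \cap \mathcal G$, which is exactly the statement that $[A,B] \in \mathcal C(\mathcal F \cap \mathcal G)$. For the reverse inclusion, suppose $[A,B] \in \mathcal C(\mathcal F \cap \mathcal G)$, so $A \subseteq B$ and $[A,B] \subseteq \mathcal F \cap \mathcal G$. Since $\mathcal F \cap \mathcal G \subseteq \mathcal F$ and $\mathcal F \cap \mathcal G \subseteq \mathcal G$, we get $[A,B] \subseteq \mathcal F$ and $[A,B] \subseteq \mathcal G$, i.e.\ $[A,B] \in \mathcal C(\mathcal F)$ and $[A,B] \in \mathcal C(\mathcal G)$, whence $[A,B] \in \mathcal C(\mathcal F) \cap \mathcal C(\mathcal G)$.

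I do not anticipate any real obstacle here: the statement is essentially the tautology that "an interval $[A,B]$ is contained in both $\mathcal F$ and $\mathcal G$ if and only if it is contained in $\mathcal F \cap \mathcal G$," together with the observation that the condition $A \subseteq B$ is common to all three cube-sets and plays no role beyond ensuring the interval is nonempty/well-formed. The only point to be careful about is to treat a cube $[A,B]$ as the \emph{set} of intermediate subsets (so that "$[A,B] \subseteq \mathcal H$" is a genuine containment of families), matching the notation fixed in Definition~\ref{def:cubes-of-family}; once that is made explicit the argument is a one-line chain of iff's. This proposition will then feed directly into Lemma~\ref{lemma:cubical-realization-commutes-intersection}, since $X(\mathcal H)$ is built as the union of the geometric realizations $|[A,B]|$ over $[A,B] \in \mathcal C(\mathcal H)$.
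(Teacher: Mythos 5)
Your proof is correct and follows essentially the same approach as the paper, which presents the same double inclusion as a one-line chain of biconditionals unwinding Definition~\ref{def:cubes-of-family}. No gaps.
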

\begin{proof}
    We note that
    \begin{align*}
        [A, B] \in \mC(\mF) \cap \mC(\mG) & \Leftrightarrow [A, B] \subseteq \mF \text{ and } [A, B] \subseteq \mG \\
        & \Leftrightarrow [A, B] \subseteq \mF \cap \mG \\
        & \Leftrightarrow [A, B] \in \mC(\mF \cap \mG).
    \end{align*}
\end{proof}

The following lemma shows that geometric realization of families of sets behaves well with taking finite intersections.

\begin{lemma}\label{lemma:cubical-realization-commutes-intersection}
    Let $\mF, \mG \subseteq 2^{[n]}$ be family of sets. Then, $X(\mF) \cap X(\mG) = X(\mF \cap \mG)$.
\end{lemma}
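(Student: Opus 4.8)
The plan is to prove the two inclusions separately. The inclusion $X(\mathcal F\cap\mathcal G)\subseteq X(\mathcal F)\cap X(\mathcal G)$ is immediate: by Proposition~\ref{prop:cubes-commute-intersection} we have $\mathcal C(\mathcal F\cap\mathcal G)=\mathcal C(\mathcal F)\cap\mathcal C(\mathcal G)$, which is contained in each of $\mathcal C(\mathcal F)$ and $\mathcal C(\mathcal G)$; taking the union of geometric realizations over a subfamily of cubes only shrinks the cubical set, so $X(\mathcal F\cap\mathcal G)\subseteq X(\mathcal F)$ and $X(\mathcal F\cap\mathcal G)\subseteq X(\mathcal G)$, whence the claim. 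The content of the lemma is the reverse inclusion, which I would handle by a point-chasing argument built around the smallest elementary cube through a point.

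First I would record an elementary fact about geometric realizations that is the main technical ingredient: for cubes $[A,B],[C,D]\subseteq 2^{[n]}$ with $A\subseteq B$ and $C\subseteq D$, one has $|[A,B]|\subseteq|[C,D]|$ if and only if $C\subseteq A$ and $B\subseteq D$, equivalently if and only if $[A,B]\subseteq[C,D]$ as subsets of $2^{[n]}$. This follows by comparing, coordinate by coordinate, the intervals $I_i\in\{\{0\},\{1\},[0,1]\}$ appearing in Definition~\ref{def:cubes-realization}. Next, given a point $x=(x_1,\dots,x_n)$ that lies in some elementary cube (so automatically $x\in[0,1]^n$), set $P=\{i:x_i=1\}$ and $Q=\{i:0<x_i<1\}$, and define $\sigma(x)=[P,\,P\cup Q]$. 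A direct check against Definition~\ref{def:cubes-realization} shows $x\in|\sigma(x)|$, and, more importantly, that whenever $x\in|[E,F]|$ for an elementary cube $[E,F]$ one must have $E\subseteq P$ (since $i\in E$ forces $x_i=1$) and $P\cup Q\subseteq F$ (since $i\notin F$ forces $x_i=0$); that is, $\sigma(x)\subseteq[E,F]$ in $2^{[n]}$. Thus $\sigma(x)$ is the $\subseteq$-minimal elementary cube containing $x$.

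Now take $x\in X(\mathcal F)\cap X(\mathcal G)$. By Definition~\ref{family-realization} there exist $[A,B]\in\mathcal C(\mathcal F)$ and $[C,D]\in\mathcal C(\mathcal G)$ with $x\in|[A,B]|\cap|[C,D]|$. By the previous paragraph $\sigma(x)\subseteq[A,B]$ and $\sigma(x)\subseteq[C,D]$ as subintervals of $2^{[n]}$, hence $\sigma(x)\subseteq[A,B]\subseteq\mathcal F$ and $\sigma(x)\subseteq[C,D]\subseteq\mathcal G$, so $\sigma(x)\in\mathcal C(\mathcal F)\cap\mathcal C(\mathcal G)$. By Proposition~\ref{prop:cubes-commute-intersection} this set equals $\mathcal C(\mathcal F\cap\mathcal G)$, so $x\in|\sigma(x)|\subseteq X(\mathcal F\cap\mathcal G)$. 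Combining the two inclusions gives $X(\mathcal F)\cap X(\mathcal G)=X(\mathcal F\cap\mathcal G)$.

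The only step requiring genuine care is the coordinate-wise verification that $\sigma(x)$ is the smallest elementary cube containing $x$ (and the companion fact that combinatorial inclusion of intervals corresponds to geometric inclusion of realizations); once these bookkeeping facts are in place, the rest of the argument is purely formal and rests entirely on Proposition~\ref{prop:cubes-commute-intersection}.
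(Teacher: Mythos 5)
Your proof is correct, but it takes a genuinely different route from the paper's. The paper first establishes the pointwise identity $|[A,B]|\cap|[C,D]| = |[A\cup C, B\cap D]|$ by a coordinate-by-coordinate case analysis on the intervals $I_i, J_i$, then distributes $\cap$ over the unions defining $X(\mF)$ and $X(\mG)$, and finally observes that $[A\cup C, B\cap D]\in\mC(\mF\cap\mG)$ whenever $[A,B]\in\mC(\mF)$ and $[C,D]\in\mC(\mG)$. You instead introduce, for each point $x$ in the unit cube, the minimal combinatorial cube $\sigma(x)=[P,P\cup Q]$ (with $P=\{i:x_i=1\}$, $Q=\{i:0<x_i<1\}$) whose realization contains $x$, verify that $x\in|[E,F]|$ forces $\sigma(x)\subseteq[E,F]$ in $2^{[n]}$, and then chase a point: if $x\in X(\mF)\cap X(\mG)$ then $\sigma(x)$ is contained in a cube of $\mF$ and a cube of $\mG$, hence $\sigma(x)\in\mC(\mF)\cap\mC(\mG)=\mC(\mF\cap\mG)$ by Proposition~\ref{prop:cubes-commute-intersection}, so $x\in|\sigma(x)|\subseteq X(\mF\cap\mG)$. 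Both proofs do the same coordinate bookkeeping; the paper's approach gives more structural information (it tells you exactly which cube of $\mF\cap\mG$ the intersection $|[A,B]|\cap|[C,D]|$ is), while your ``support of a point'' argument is a cleaner, reusable device when only membership is at stake. One small remark: the preliminary equivalence you state (geometric inclusion of realizations $\Leftrightarrow$ combinatorial inclusion of intervals) is never actually invoked in your argument --- the minimality of $\sigma(x)$, which you establish directly, is all that is used --- so it can be dropped.
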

\begin{proof}
    We begin with $[A, B], [C, D] \subseteq 2^{[n]}$. We note that $[A, B] \cap [C, D] = [A \cup C, B \cap D]$. This is because $A \subseteq E \subseteq B$ and $C \subseteq E \subseteq D \Leftrightarrow A \cup C \subseteq E \subseteq B \cap D$. 
    We first show that
    $$|[A, B]| \cap |[C, D]| = |[A \cup C, B \cap D]|.$$
    As in Definition~\ref{def:cubes-realization}, we let $|[A, B]| = I_1 \times \dots \times I_n \subseteq \mathbb R^n$ and $|[C, D]| = J_1 \times \dots \times J_n \subseteq \mathbb R^n$. This means that 
    $$|[A, B]| \cap |[C, D]| = (I_1 \cap J_1) \times \dots \times (I_n \cap J_n).$$
    We note that if $i \in A \setminus D$ then $I_i = \{1\}$ and $J_i = \{0\}$ by construction. This readily yields $|[A, B]| \cap |[C, D]| = |[A \cup C, B \cap D]| = \emptyset$. 
    On the other hand we assume that $A\subseteq D$ and $C \subseteq B$ (by symmetry). 

    Therefore, we have 
    \begin{align*}
        I_i \cap J_i = \{1\} & \Leftrightarrow (I_i = \{1\} \text{ and } J_i = [0,1]) \text{ or } (I_i = [0, 1] \text{ and } J_i = \{1\}) \\
        & \hspace{5mm}\text{ or } (I_i = \{1\} \text{ and } J_i = \{1\}) \\
        & \Leftrightarrow i \in (A\cap D \cap C^c) \cup (B \cap A^c \cap C) \cup (A \cap C) \\
        & \Leftrightarrow i \in (A \setminus C) \cup (C \setminus A) \cup (A \cap C) \\
        & \Leftrightarrow i \in A \cup C.
    \end{align*}
    We also have 
    \begin{align*}
        I_i \cap J_i = \{0\} &\Leftrightarrow (I_i = \{0\} \text{ and } J_i = [0, 1]) \text{ or } (I_i = [0, 1] \text{ and } J_i = \{0\}) \\
        &\hspace{5mm}\text{ or } (I_i = \{0\} \text{ and } J_i = \{0\}) \\
        & \Leftrightarrow i \in (B^c \cap D \cap C^c) \cup (B \cap A^c \cap D^c) \cup (B^c \cap D^c) \\
        & \Leftrightarrow i \in (D\cap B^c) \cup (B \cap D^c) \cup (B^c \cap D^c) \\
        & \Leftrightarrow i \in (B \cap D)^c.
    \end{align*}
    Finally, we have 
    \begin{align*}
        I_i \cap J_i = [0, 1] & \Leftrightarrow (I_i = [0, 1]) \text{ and } (J_i = [0, 1]) \\
        & \Leftrightarrow i \in B\cap A^c \cap D \cap C^c \\
        & \Leftrightarrow i \in B \cap D \setminus (A \cup C).
    \end{align*}
    This means that $(I_1 \cap J_1) \times \dots \times (I_n \cap J_n) = |[A \cup C, B \cap D]|$ as required. We now come back to proving our original claim.
    Let $\mathcal F, \mathcal G \subseteq 2^{[n]}$ be family of sets. We have
    \begin{align}
        X(\mF) \cap X(\mG) &= (\bigcup_{[A, B] \in \mC(\mF)}|[A, B]|) \cap (\bigcup_{[C, D] \in \mC(\mG)} |[C, D]|)  \nonumber \\ 
        &= \bigcup_{[A, B] \in \mC(\mF), [C, D] \in \mC(\mG)} |[A, B]| \cap |[C, D]| \nonumber \\
        &= \bigcup_{[A, B] \in \mC(\mF), [C, D] \in \mC(\mG)} |[A \cup C, B \cap D]| \label{eq:for-lemma-cubical-commutes-intersetcion}\\
        &\supseteq \bigcup_{[A, B] \in \mC(\mF) \cap \mC(\mG)} |[A, B]|  \nonumber \\
        &= \bigcup_{[A, B] \in \mC(\mF \cap \mG)} |[A, B]| \hspace{27mm} (\text{Using Proposition~\ref{prop:cubes-commute-intersection}}) \nonumber\\
        &= X(\mF \cap \mG). \nonumber
    \end{align}
    On the other hand, we note that given $[A, B] \in \mC(\mF)$ and $[C, D] \in \mC(\mG)$, we have $[A \cup C, B \cap D] \in \mC(\mF \cap \mG)$. This is because, $[A, B] \subseteq \mF \Rightarrow [A \cup C, B \cap D] \subseteq \mF$ and $[C, D] \subseteq \mG \Rightarrow [A \cup C, B \cap D] \subseteq \mG$. Using Equation~\ref{eq:for-lemma-cubical-commutes-intersetcion}, we conclude that $X(\mF \cap \mG) \subseteq X(\mF \cap \mG)$. This completes the proof.
\end{proof}

The following Lemma tells that a specific type of cubical sets are always acylic. We will encounter these cubical sets in the proof of Theorem~\ref{thm:acyclic}.

\begin{lemma}\label{lemma:ayclic-cube-minus-points}
    Let $n \geq 2$ and $k \geq 1$ be integers such that $k < n$. Then,  
    $$\bigcup_{i=1}^k \bigcup_{j \neq i} |[i, [n] \setminus \{j\}]|$$
    is acyclic.
\end{lemma}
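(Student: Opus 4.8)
The plan is to prove the statement by a double induction: an outer induction on $n$ and, for each fixed $n$, an inner induction on $k$. Throughout, write $Y_i=\bigcup_{j\neq i}|[i,[n]\setminus\{j\}]|$ for $1\le i\le k$ and $Z_m=\bigcup_{i=1}^m Y_i$, so that the set in the statement is $Z_k$. The backbone of the argument is Theorem~\ref{thm:ayclic-by-union-of-two}: to add the piece $Y_{m+1}$ to $Z_m$ it suffices to know that $Z_m$, $Y_{m+1}$ and $Z_m\cap Y_{m+1}$ are all acyclic.

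\emph{The pieces are star-shaped.} For fixed $i$ let $v^{(i)}\in\mathbb R^n$ be the point whose $i$-th coordinate is $1$ and whose other coordinates are $0$. Each $|[i,[n]\setminus\{j\}]|$ is an elementary cube, and by Definition~\ref{def:cubes-realization} it contains $v^{(i)}$ (its $i$-th factor is $\{1\}$, its $j$-th factor is $\{0\}$, and every remaining factor is $[0,1]\ni 0$). Hence $Y_i$ is a finite union of elementary cubes each containing $v^{(i)}$, i.e.\ it is star-shaped with respect to $v^{(i)}$ in the sense of Definition~\ref{def:start-shaped-cubical-set} (and non-empty since $n\ge 2$ forces some $j\neq i$), so $Y_i$ is acyclic by Proposition~\ref{prop:star-shaped-acylic}. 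In particular this already proves the case $k=1$ for every $n\ge2$, and it supplies the base $m=1$ of the inner induction.

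\emph{Computing and recognizing the overlaps.} For $m+1\le k$ I would compute $Z_m\cap Y_{m+1}=\bigcup_{i=1}^m(Y_i\cap Y_{m+1})$ by distributing each intersection over the defining unions and invoking the elementary-cube intersection identity $|[A,B]|\cap|[C,D]|=|[A\cup C,B\cap D]|$ proved inside Lemma~\ref{lemma:cubical-realization-commutes-intersection}. This yields $|[i,[n]\setminus\{j\}]|\cap|[m{+}1,[n]\setminus\{\ell\}]|=|[\{i,m{+}1\},[n]\setminus\{j,\ell\}]|$, which is non-empty precisely when $i\neq\ell$ and $m+1\neq j$. So $Z_m\cap Y_{m+1}$ is a union of elementary cubes all lying in the hyperplane $\{x_{m+1}=1\}$, each having its $i$-th coordinate (for the relevant $i\le m$) constantly $1$. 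Now apply the affine bijection that forgets the constantly-$1$ coordinate $x_{m+1}$ and relabels the ground set $[n]\setminus\{m+1\}$ in increasing order as $[n-1]$; it carries elementary cubes to elementary cubes, hence induces an isomorphism of cubical sets, and (absorbing the cubes with $j\neq\ell$ as faces of those with $j=\ell$, and noting the relabeling fixes $1,\dots,m$ since these precede $m+1$) it carries $Z_m\cap Y_{m+1}$ to $\bigcup_{i=1}^{m}\bigcup_{j\in[n-1]\setminus\{i\}}|[i,[n-1]\setminus\{j\}]|$. This is exactly the set in the Lemma for parameters $n'=n-1$ and $k'=m$. Since we are in the inner induction we have $k\ge2$, hence $n\ge3$ and $n'=n-1\ge2$; and $m+1\le k\le n-1$ gives $1\le m\le n-2=n'-1<n'$. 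Thus the outer induction hypothesis applies and $Z_m\cap Y_{m+1}$ is acyclic.

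\emph{Assembling.} Granting by the inner hypothesis that $Z_m$ is acyclic, and using that $Y_{m+1}$ is acyclic (star-shaped step) and $Z_m\cap Y_{m+1}$ is acyclic (previous step), Theorem~\ref{thm:ayclic-by-union-of-two} shows $Z_{m+1}=Z_m\cup Y_{m+1}$ is acyclic; iterating up to $m+1=k$ gives that $Z_k$ is acyclic. The base of the outer induction is $n=2$, where $k<n$ forces $k=1$ and $Z_1=Y_1=|[\{1\},\{1\}]|$ is a single point, hence acyclic. I expect the only genuinely delicate point to be the identification in the second paragraph: correctly computing $Z_m\cap Y_{m+1}$, restricting it to the face $\{x_{m+1}=1\}$, and recognizing the result—after the relabeling—as a bona fide smaller instance of the Lemma whose parameters still satisfy $n'\ge2$ and $1\le k'<n'$. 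The intersection bookkeeping is routine given the identity from Lemma~\ref{lemma:cubical-realization-commutes-intersection}, and the star-shaped and gluing steps are immediate from the cited results.
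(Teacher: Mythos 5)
Your proof is correct and follows essentially the same plan as the paper's: a double induction on $n$ and then $k$, with each $Y_i$ handled as a star-shaped set, the gluing done by Theorem~\ref{thm:ayclic-by-union-of-two}, and the overlap $Z_m\cap Y_{m+1}$ identified (after deleting the constant coordinate $x_{m+1}$) with the instance of the lemma for $(n-1,m)$. The only cosmetic difference is that the paper computes the overlap by rewriting $X_1$ and $X_2$ as $X(\mG)$, $X(\mH)$ and invoking Lemma~\ref{lemma:cubical-realization-commutes-intersection} at the level of families, whereas you compute the same thing cube-by-cube via the intersection identity from inside that lemma; the resulting reduction and parameter bookkeeping coincide.
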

\begin{proof}
    We prove this result by induction on $n$. First of all, we note that for $n = 2$, we only need to check for the case when $k = 1$. The corresponding set is $|\{1\}|$ which is clearly acyclic. Suppose that the result is true for all $n \leq t$ for some $t \geq 2$. Let $n = t + 1$. We now perform induction on $k$. Note that if $k  = 1$ then the corresponding cubical set is $\bigcup_{j \neq 1} |[1, [t+1]\setminus\{j\}]|$. This is a star-shaped cubical set and hence it is acyclic by using Proposition~\ref{prop:star-shaped-acylic}. 

    Suppose that the result is true for every $k \leq r$ for some $r$ satisfying $1 \leq r < t$. Let us now consider $k = r + 1$. We have 
    \begin{align*}
        \bigcup_{i=1}^{r+1}\bigcup_{j\neq i} |[i, [t+1] \setminus \{j\}]| &= \bigg(\bigcup_{i=1}^r \bigcup_{j \neq i} |[i, [t+1] \setminus \{j\}]|\bigg) \cup \bigcup_{j \neq r+1} |[r+1, [t+1] \setminus \{j\}]| \\
        &= X_1 \cup X_2
    \end{align*}
    where we put $X_1 = \bigcup_{i=1}^t \bigcup_{j \neq i} |[i, [t+1] \setminus \{j\}]|$ and $X_2 = \bigcup_{j \neq r+1} |[r+1, [t+1] \setminus \{j\}]|$. We note that $X_1$ is acyclic by the induction hypothesis. Since $X_2$ is star-shaped, using Proposition~\ref{prop:star-shaped-acylic}, we conclude that $X_2$ is acyclic. It is easy to see that $X_1 = X(\mG)$ where $\mG = \{A \subsetneq [t+1] : A \cap [r] \neq \emptyset\}$ and $X_2 = X(\mH)$ where $\mH = \{A \subsetneq [t + 1] : r + 1 \in A \}$. This means that $X_1 \cap X_2 = X(\mG) \cap X(\mH) = X(\mG \cap \mH)$ (using Lemma~\ref{lemma:cubical-realization-commutes-intersection}). 

    Now, we note that 
    $$\mG \cap \mH = \{A \subsetneq [t+1] : r + 1 \in A, A \cap [r] \neq \emptyset\}.$$
    Let $\mathcal K = \{A \subsetneq [t] : A \cap [r] \neq \emptyset \}$. Clearly, $X(\mG \cap \mH)$ is homeomorphic to $X(\mathcal K)$. We note that $X(\mathcal K)$ is acyclic by the induction hypothesis. Hence, $X(\mathcal G \cap \mathcal H)$ is acyclic. Using Theorem~\ref{thm:ayclic-by-union-of-two}, we see that $X_1 \cup X_2 = \bigcup_{i=1}^{r+1}\bigcup_{j\neq i} |[i, [t+1] \setminus \{j\}]|$ is acylic. This completes the proof.
\end{proof}

We now come to our main result:

\section{Proof of Theorem~\ref{thm:acyclic}}\label{sec:proof-main-theorem}

We proceed by induction on $n$. For $n = 1$, the only families are $\{\emptyset\}$ and $\{\emptyset, \{1\}\}$. The result is clearly true for these families. Suppose that the result is true for all $n \leq t$. Let $n  = t+1$. We now perform induction on $m(\mF) = \max_{B \in \mF} |B|$ where $\mF \subseteq 2^{[t+1]}$ is a simply rooted family of sets such that $\emptyset \in \mF$. The result is easily seen to be true for all such $\mF$ satisfying $m(\mF) \leq 1$. Suppose that the result is true for all such $\mF$ satisfying $m(\mF) \leq r$ for some $r \geq 1$. Let $\mF \subseteq 2^{[t+1]}$ be such a family with $m(\mF) = r + 1$. Let $A$ be a set of maximum size in $\mF$. Since $m(\mF) = r+1$, we note that $|A| \geq 2$. As in Definition~\ref{def:family-of-sets-at-A}, let $\mF_A = \{B \in \mF : [B, A] \subseteq \mF\}$. First of all, if $\mF = \mF_A$ then we are done since in this case, $\emptyset \in \mF_A$ and consequently $\mF_A$ is an elementary cube and hence it is acyclic using Proposition~\ref{prop:star-shaped-acylic}. So, we assume that $\mF \neq \mF_A$. We note that $\mF  = (\mF \setminus \{A\}) \cup \mF_A$. We also note that $\mC(\mF) = \mC(\mF \setminus \{A\}) \cup \mC(\mF_A)$. To see this, we begin with $[C, D] \in \mathcal C(\mathcal F)$. If $D = A$ then $[C, D] \in \mC(\mF_A)$. On the other hand, if $D \neq A$, we have $[C, D] \in \mC(\mF\setminus \{A\})$ (because $A$ is of maximum cardinality). This allows us to conclude that $\mathcal C(\mathcal F) \subseteq \mathcal C(\mathcal F \setminus \{A\}) \cup \mathcal C(\mathcal F_A)$. On the other hand, $\mC(\mF) \supseteq \mC(\mF \setminus \{A\}) \cup \mC(\mF_A)$ is obviously true. This shows that $\mC(\mF) = \mC(\mF \setminus \{A\}) \cup \mC(\mF_A)$. Now, 
    \begin{align*}
        X(\mF \setminus \{A\}) \cup X(\mF_A) &= \bigg(\bigcup_{[C, D] \in \mC(\mF \setminus \{A\}} |[C, D]|\bigg) \cup \bigg(\bigcup_{[C, D] \in \mC(\mF_A)} |[C, D]|\bigg) \\
        &= \bigcup_{[C, D] \in \mC(\mF \setminus \{A\}) \cup \mC(\mF_A)} |[C, D]| \\
        &= \bigcup_{[C, D] \in \mC(\mF)} |[C, D]| \\
        &= X(\mF).
    \end{align*}
    This shows that $X(\mF) = X(\mF \setminus \{A\}) \cup X(\mF_A)$. Now, we know that $X(\mF\setminus \{A\})$ is acyclic by the induction hypothesis. Using Proposition~\ref{prop:family-of-sets-at-A-equivalent}, we have that $\mF_A = \bigcup_{[i, A] \subseteq \mF} [i, A]$. We note that $\{[i, A] \subseteq \mF_A\}$ forms a maximal set of cubes of $\mF_A$. To see this, note that if $[C, D] \subseteq \mF$ then, since $C \in \mF_A$ there is an $i \in A$ such that $[i, A] \subseteq \mF$ and $C \in [i, A]$. Since, $D \subseteq A$, we obtain that $[C, D] \subseteq [i, A]$. Thus, we obtain that $X(\mF_A) = \bigcup_{[i, A] \subseteq \mF} |[i, A]|$ and hence, it is star-shaped. Using Proposition~\ref{prop:star-shaped-acylic}, we obtain that it is acyclic. Now, $X(\mF \setminus \{A\}) \cap X(\mF_A) = X((\mF\setminus \{A\}) \cap \mF_A) = X(\mF_A \setminus \{A\})$ using Lemma~\ref{lemma:cubical-realization-commutes-intersection}. Since $|A| \geq 2$, there exists an $i$ such that $[i, A] \subseteq \mF$ and hence, $\mF_A \setminus \{A\} \neq \emptyset$. This shows that $X(\mF_A \setminus \{A\}) \neq \emptyset.$ Thus, if we can show that $X(\mF_A \setminus \{A\})$ is acyclic then we will be done using Theorem~\ref{thm:ayclic-by-union-of-two}. We consider the following two cases:

    \textbf{Case 1:} Suppose that for every $i \in A$, we have that $[i, A] \subseteq \mF$. Since $\emptyset \in \mF$, this clearly means that $[\emptyset, A] \in \mF$. In this case $X(\mF \setminus \{A\}) = \bigcup_{i\in A} |[\emptyset, A\setminus \{i\}]|$ which is a star-shaped set and hence acyclic using Proposition~\ref{prop:star-shaped-acylic}. 

    \textbf{Case 2:} Suppose that there is an $i \in A$ such that $[i, A] \nsubseteq \mF$. Using $\mF_A = \bigcup_{[i, A]\subseteq \mF} [i,A]$, we obtain that $\mF_A \setminus \{A\} = \bigcup_{[i, A] \subseteq \mF} \bigcup_{j \neq i, j \in A} [i, A \setminus \{j\}]$. Since, $\{[i, A \setminus \{j\}]: [i, A] \subseteq \mF, j \in A, i \neq j\}$ forms a maximal set of cubes of $\mathcal F_A \setminus \{A\}$, we obtain that  $X(\mF_A \setminus \{A\}) = \bigcup_{[i, A] \subseteq \mF} \bigcup_{j \neq i, j \in A}|[i, A \setminus \{j\}]|$. Since, $|\{i \in A : [i, A] \subseteq \mF\}| < |A|$, using Lemma~\ref{lemma:ayclic-cube-minus-points}, it follows that $X(\mF_A \setminus \{A\})$ is acyclic. 
    
    This completes the proof.
\bibliographystyle{plain}
\bibliography{cubical_uc}
\nocite{*}

\end{document}